\documentclass[a4paper]{amsart} 

\usepackage[all]{xy}
\usepackage{amssymb}
\usepackage{eucal}
\usepackage{hyperref}
\usepackage{tikz}
\usetikzlibrary{arrows}

\calclayout
\makeatletter
\makeatother
\SelectTips{eu}{}

\theoremstyle{definition}
\newtheorem{theorem}{Theorem}[section]
\newtheorem{lemma}[theorem]{Lemma}
\newtheorem{prop}[theorem]{Proposition}
\newtheorem{defn}[theorem]{Definition}
\newtheorem{cor}[theorem]{Corollary}
\newtheorem{conj}[theorem]{Conjecture}
\newtheorem{rem}[theorem]{Remark}
\newtheorem{ex}[theorem]{Example}

\title{Acyclic cluster algebras from a ring theoretic point of view}
\author{Philipp Lampe}
\address{Philipp Lampe\\ Fakult\"at f\"ur Mathematik\\
Universit\"at Bielefeld\\ D-33501 Bielefeld\\ Germany.}
\email{lampe@math.uni-bielefeld.de}

\thanks{Version from \today.}

\begin{document}
\maketitle
\begin{abstract}The article gives a ring theoretic perspective on cluster algebras.  Gei\ss-Leclerc-Schr\"oer prove that all cluster variables in a cluster algebra are irreducible elements. Furthermore, they provide two necessary conditions for a cluster algebra to be a unique factorization domain, namely the irreducibility and the coprimality of the initial exchange polynomials. 

We present a sufficient condition for a cluster algebra to be a unique factorization domain in terms of primary decompositions of certain ideals generated by initial cluster variables and initial exchange polynomials. As an application, the criterion enables us to decide which coefficient-free cluster algebras of Dynkin type $A, D$ or $E$ are unique factorization domains. Moreover, it yields a normal form for irreducible elements in cluster algebras that satisfy the condition. Proof techniques include methods from commutative algebra. 

In addition, we state a conjecture about the range of application of the criterion.
\end{abstract}

\maketitle
\setcounter{tocdepth}{1}
\tableofcontents

\section{Introduction}
The article studies Fomin-Zelevinsky's \textit{cluster algebras} from a ring-theoretic point of view. We focus on two questions: When is a given cluster algebra a unique factorization domain? What are irreducible elements?

In a seminal paper Fomin-Zelevinsky \cite{FZ} have introduced cluster algebras in order to study Lusztig's \textit{canonical bases} \cite{L1} arising in Lie theory and \textit{totally positive matrices} \cite{L2}. The theory of cluster algebras, which was further developed by Fomin-Zelevinsky \cite{FZ2,FZ4} and Berenstein-Fomin-Zelevinsky \cite{BFZ3}, has amplified further branches of mathematics. Let us mention two directions: Via the \textit{Caldero-Chapoton map} \cite{CC} cluster algebras have become a tool to study \textit{representations of quivers}. The theory culminated in the categorification of acyclic cluster algebras by Buan-Marsh-Reineke-Reiten-Todorov's \textit{cluster categories} \cite{BMRRT} by work of the same authors \cite{BMRRT} and Caldero-Keller \cite{CK,CK2}. As a second example, Fomin-Shapiro-Thurston \cite{FST} (generalizing work of Fock-Goncharov \cite{FG1,FG2} and Gekhtman-Shapiro-Vainshtein \cite{GSV}) have constructed cluster algebras from marked surfaces using Poisson geometry.

The definition of a cluster algebra is elementary. A cluster algebra of rank $n$ contains a distinguished set of generators called the set of \textit{cluster variables} and the cluster variables clump into several sets of cardinality $n$ which may overlap and which are called \textit{clusters}. Any cluster can be obtained from a reference cluster, which is called the \textit{initial cluster}, by a sequence of \textit{mutations}. Every mutation exchanges exactly one cluster variable of a cluster by another. There are explicit combinatorial formulae for the exchange relations, so that the initial cluster together with the initial exchange data, which is usually stored in a matrix denoted by the letter $B$, determines the whole cluster algebra.  

Many authors have studied the structure of a cluster algebra as a \textit{vector space}. Examples for vector space bases include the set of \textit{cluster monomials} for coefficient-free cluster algebras of Dynkin type due to Caldero-Keller \cite{CK}, Lee-Li-Zelevinsky's \textit{greedy basis} \cite{LLZ} for cluster algebras of rank 2, the \textit{dual semicanonical basis} for cluster algebras attached to unipotent cells due to Gei\ss-Leclerc-Schr\"oer \cite{GLS3,GLS2}, Dupont-Thomas's \textit{atomic basis} \cite{DT}, and Musiker-Schiffler-Williams's \textit{bangle} and \textit{bracelet basis} \cite{MSW} for cluster algebras from surfaces. Moreover, Cerulli-Keller-Labardini-Plamondon \cite{CKLP} have shown that the set of cluster monomials in any skew-symmetric cluster algebra is linearly independent. 

The study of bases and linear independence is a natural question to ask since canonical bases were a motivation for introducing cluster algebras. On the other hand, in most examples the algebraic structure is essential: algebraic exchange relations in the cluster algebra correspond to \textit{exchange triangles} in the context of cluster categories and to \textit{Ptolemy relations} in the context of marked surfaces. Therefore, it also natural to study the structure of a given cluster algebra as a \textit{ring}. 

By construction every cluster algebra is an integral domain as it is a subring of its ambient field. It is seldom a principal ideal domain. In between there is the class of unique factorization domains: every principal ideal domain is a unique factorization domain and every unique factorization domain is an integral domain. Therefore, the question which cluster algebras are unique factorization domains suggests itself. Gei\ss-Leclerc-Schr\"oer \cite[Section 6.1]{GLS} give two necessary conditions for a cluster algebra (of rank $n$) to be a unique factorization domain. The key players in both criteria are certain polynomials in the initial cluster variables which describe the initial exchange relations and which we shall refer to as the \textit{initial exchange polynomials}. The criteria say that if a cluster algebra is a unique factorization domain, then its initial exchange polynomials are irreducible and pairwise different. In this article, we state a sufficient condition for an \textit{acyclic} cluster algebra to be a unique factorization domain, see Theorem \ref{Criterion}. Moreover, we conjecture that for acyclic cluster algebras the existence of a seed, with respect to which some exchange polynomial is reducible or two exchange polynomials have a common factor, is the only obstruction to unique factorization. The conjecture would follow from another more algebraic conjecture, see Conjecture \ref{IdealProperties}, which we verify in several cases.       

Let us say a few words about the acyclicity assumption. We shall give a precise definition in Section \ref{Recap}, a way to think about the assumption is as follows: An obvious generating set of a cluster algebra is by definition the set of cluster variables. The set of cluster variables can be finite or infinite, and Fomin-Zelevinsky \cite[Theorem 1.4]{FZ2} classified cluster algebra of finite type. As a generating set, the set of cluster variables often is redundant, and even cluster algebras of infinite type are often generated by finitely many elements. The acyclicity assumption ensures that this is true. More precisely, an acyclic coefficient-free cluster algebra of rank $n$ is generated by $2n$ elements, namely the cluster variables of an acyclic seed together with the ones obtained from it by one mutation. As finitely generated algebras, acyclic cluster algebras are Noetherian. Non-acyclic cluster algebras can however be non-Noetherian and not finitely generated, see Muller \cite[Proposition 11.3]{M}.

The article is organized as follows. In the next section, we introduce the basic notions in the context of cluster algebras. More precisely, Section \ref{BN1} focuses on the definitions, Section \ref{BN2} recalls Berenstein-Fomin-Zelevinsky's main results \cite{BFZ3} on lower and upper bounds, Section \ref{BN3} presents Gei\ss -Leclerc-Schr\"oer's results \cite{GLS} on irreducible elements and unique factorization, and Section~\ref{BN4} illustrates the results for skew-symmetric cluster algebras of finite type.  

We devote Section \ref{Description} to the aforementioned sufficient criterion for unique factorization. We formulate precise assumptions under which we expect unique factorization -- informally already stated above -- in Subsection \ref{Assumptions}. Subsection \ref{Ideals} introduces certain prime ideals in the polynomial ring generated by the initial cluster variables and studies their algebraic properties. We explain our motivation for the definition of the ideals in Subsection \ref{des}: using Fomin-Zelevinsky's Laurent phenomenon \cite[Theorem 3.1]{FZ} and Berenstein-Fomin-Zelevinsky's equality of an acyclic cluster algebra and its lower bound \cite[Theorem 1.20]{BFZ3} we see that polynomials appearing in numerators of cluster algebra elements lie in products of the just introduced ideals. In Subsection \ref{conjecture} we formulate a conjecture for the primary decomposition of such product ideals. In Subsection \ref{Crit} we state the theorem relating the criterion with unique factorization.       

Having proved this theorem, we are ready to prove unique factorization in different cases in Section \ref{Applications}. Besides cluster algebras of rank 2, to be handled in Subsection \ref{RankTwo}, the section concerns mainly skew-symmetric cluster algebras of finite type. Subsection \ref{Lemmata} states two auxiliary lemmas. Subsection \ref{DynkinUFD} verifies Conjecture \ref{IdealProperties} for cluster algebras of simply-laced Dynkin type. We summarize the surprising results in the following table. 

\begin{center}
\begin{tabular}{|c||c|c|}\hline
Type&UFD&not UFD\\\hline\hline
$A_n$&$n\neq 3$&$n=3$\\\hline
$D_n$&-&$n\geq 4$\\\hline
$E_n$&$n=6,7,8$&-\\\hline
\end{tabular}
\end{center}

\section{Reminder: the basic notions of cluster algebras}
\label{Recap}

\subsection{The definition of a cluster algebra and basic properties}
\label{BN1}
In this section we briefly outline the definition of cluster algebras and present their main features. Cluster algebras are commutative algebras; in their seminal paper \cite{FZ} Fomin-Zelevinsky use the ring of integers as a ground ring. More generally, it is also possible to define cluster algebras with an arbitrary field as ground ring as long as the characteristic is zero, see Gei\ss -Leclerc-Schr\"oer \cite{GLS} for example. In this presentation we stick to the more general setup.

Let $K$ be a field of characteristic $\textrm{char}(K)=0$ or let $K=\mathbb{Z}$. Furthermore, let $m$ and $n$ be positive integers with $m\geq n\geq 1$. We fix algebraically independent variables $u_1,\ldots,u_m$ and call the field $\mathcal{F}=K(u_1,\ldots,u_m)$ of rational functions in these variables the \textit{ambient field}. By the term \textit{algebra} we mean a unital algebra with a multiplicative identity element.

An integer $n\times n$ matrix $B$ is called \textit{skew-symmetrizable} if there is a diagonal matrix $D=\textrm{diag}(d_1,d_2,\ldots,d_n)$ with positive integer diagonal entries such that the matrix $DB$ is skew-symmetric, i.e., $d_ib_{ij}=-d_jb_{ji}$ for all $1\leq i,j\leq n$. Let us mention some properties: If $B$ is a skew-symmetrizable integer $n\times n$ matrix, then for all $1\leq i\leq n$ the diagonal entry $b_{ii}$ is zero and for all $1\leq i,j\leq n$ the entry $b_{ij}$ is zero if and only if the entry $b_{ji}$ is zero. Moreover, $B$ is \textit{sign skew-symmetric}, i.e., if $b_{ij}>0$ for some indices $1\leq i,j\leq n$, then the we have $b_{ji}<0$.

Furthermore, if $\tilde{B}$ is an integer $m\times n$ matrix, then we call the $n\times n$ submatrix $B$ formed by the first $n$ rows the \textit{principal part} of $\tilde{B}$. 

An \textit{exchange matrix} $\tilde{B}$ is an integer $m\times n$ matrix $\tilde{B}$ with a skew-symmetrizable principal part $B$. A \textit{cluster} is  a sequence $\mathbf{x}=(x_1,x_2,\ldots x_m)\in \mathcal{F}^m$ of $m$ elements in $\mathcal{F}$ which are algebraically independent over $K$. A {\it seed} is a pair $(\mathbf{x},\tilde{B})$ consisting of a cluster $\mathbf{x}$ and an exchange matrix $\tilde{B}$. The first $n$ elements $x_1,\ldots,x_n$ of $\mathbf{x}$ are called \textit{cluster variables}, the last $m-n$ elements $x_{n+1},\ldots,x_m$ of $\mathbf{x}$ of $\mathbf{x}$ are called \textit{frozen variables}. 

Let $(\mathbf{x},\tilde{B})$ be a seed and let $k$ be an integer such that $1 \leq k \leq n$. The \textit{mutation} of $(\mathbf{x},\tilde{B})$ at $k$ is a new seed $\mu_k(\mathbf{x},\tilde{B})=(\mathbf{x}',\tilde{B}')$ with exchange matrix $\tilde{B}' = (b_{ij}')$ defined by
\begin{align*}
b_{ij}'=
\begin{cases}
- b_{ij} & \text{if $i=k$ or $j=k$},\\
b_{ij} + \dfrac{|b_{ik}|b_{kj} + b_{ik}|b_{kj}|}{2} &\text{otherwise},
\end{cases}
\end{align*}
and a new cluster defined $\mathbf{x}' = (x_1',\ldots,x_m')$ defined by
\begin{align*}
x_j' =\begin{cases}
x_k^{-1}\left(\prod_{b_{ik} > 0} x_i^{b_{ik}} +\prod_{b_{ik} < 0} x_i^{-b_{ik}}\right) &\text{if $j=k$},\\
x_j & \text{otherwise}.
\end{cases}
\end{align*}
The last relation is called \textit{exchange relation}. It is easy to see that $\mu_k(\mathbf{x},\tilde{B})$ is again a seed (with the very same diagonal matrix $D$ proving the skew-symmetrizability of the principle part). We also denote $\tilde{B'}$ by $\mu_k(\tilde{B})$ and $\mathbf{x}'$ by $\mu_k(\mathbf{x})$. The following proposition establishes  a main property of mutations; it is due to Fomin-Zelevinsky \cite[Section 4]{FZ}.

\begin{prop}
\label{involution}
The map $\mu_k$ is an involution, i.e., $(\mu_k\circ\mu_k)(\mathbf{x},\tilde{B}) = (\mathbf{x},\tilde{B})$ for all $1\leq k\leq n$ and all seeds $(\mathbf{x},\tilde{B})$.
\end{prop}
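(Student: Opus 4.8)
The plan is to verify the two components of the seed separately. First I would check that the exchange matrix returns to itself, i.e.\ that $\mu_k(\mu_k(\tilde B)) = \tilde B$. Write $\tilde B' = \mu_k(\tilde B)$ and $\tilde B'' = \mu_k(\tilde B')$. For entries in row or column $k$, the rule negates twice, so $b''_{ij} = -b'_{ij} = b_{ij}$. For the remaining entries one has to compute
\[
b''_{ij} = b'_{ij} + \frac{|b'_{ik}|\,b'_{kj} + b'_{ik}\,|b'_{kj}|}{2},
\]
and substitute $b'_{ij} = b_{ij} + \frac{|b_{ik}|b_{kj} + b_{ik}|b_{kj}|}{2}$, $b'_{ik} = -b_{ik}$, $b'_{kj} = -b_{kj}$. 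Since $|-b_{ik}| = |b_{ik}|$ and $|-b_{kj}| = |b_{kj}|$, the correction term for $\tilde B''$ becomes $\frac{|b_{ik}|(-b_{kj}) + (-b_{ik})|b_{kj}|}{2} = -\frac{|b_{ik}|b_{kj} + b_{ik}|b_{kj}|}{2}$, which is exactly the negative of the correction term that was added to pass from $\tilde B$ to $\tilde B'$; hence it cancels and $b''_{ij} = b_{ij}$. This step is a short algebraic identity and is not the main difficulty.

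Next I would check that the cluster returns to itself, $\mu_k(\mu_k(\mathbf x)) = \mathbf x$. For $j \neq k$ nothing changes, so only the $k$-th entry needs attention. Let $\mathbf x' = \mu_k(\mathbf x)$ with $x'_k = x_k^{-1}\bigl(\prod_{b_{ik}>0} x_i^{b_{ik}} + \prod_{b_{ik}<0} x_i^{-b_{ik}}\bigr)$, and let $\tilde B' = \mu_k(\tilde B)$. Applying $\mu_k$ again, the new $k$-th cluster variable is
\[
x''_k = (x'_k)^{-1}\left(\prod_{b'_{ik}>0} (x'_i)^{b'_{ik}} + \prod_{b'_{ik}<0} (x'_i)^{-b'_{ik}}\right).
\]
Here the key observations are: (i) for $i \neq k$ we have $x'_i = x_i$, so the products only involve the unchanged variables $x_i$; and (ii) $b'_{ik} = -b_{ik}$, so the sign conditions $b'_{ik} > 0$ and $b'_{ik} < 0$ swap the two products appearing in the exchange relation. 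Therefore $\prod_{b'_{ik}>0}(x'_i)^{b'_{ik}} + \prod_{b'_{ik}<0}(x'_i)^{-b'_{ik}} = \prod_{b_{ik}<0} x_i^{-b_{ik}} + \prod_{b_{ik}>0} x_i^{b_{ik}}$, which is precisely the numerator that appears in the defining formula for $x'_k$. Denoting this common quantity by $P$, we get $x'_k = x_k^{-1}P$ and hence $x''_k = (x_k^{-1}P)^{-1} P = x_k$, as desired.

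The main point to be careful about — the only place where anything could go wrong — is the bookkeeping of signs and the implicit claim that the numerator $P = \prod_{b_{ik}>0} x_i^{b_{ik}} + \prod_{b_{ik}<0} x_i^{-b_{ik}}$ is genuinely independent of which of the two monomials we list first, together with the fact that the case $B = 0$ in the relevant column (where both empty products equal $1$, giving $P = 2$) still works: then $x'_k = 2/x_k$ and $x''_k = 2/x'_k = x_k$. One should also record that $\mathbf x'$ is again a cluster (its entries are algebraically independent over $K$, since $x'_k$ together with the $x_i$, $i\neq k$, generate the same subfield of $\mathcal F$ as $\mathbf x$ does), so that $\mu_k$ is indeed defined on $\mu_k(\mathbf x,\tilde B)$; this was already noted in the text. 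With both coordinates verified, $(\mu_k\circ\mu_k)(\mathbf x,\tilde B) = (\mathbf x,\tilde B)$, completing the proof.
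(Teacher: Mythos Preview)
Your proof is correct and is the standard direct verification. Note, however, that the paper does not actually supply its own proof of this proposition: it merely states the result and attributes it to Fomin--Zelevinsky \cite[Section~4]{FZ}. So there is nothing to compare against in the paper itself; your argument is essentially the one found in the cited source.
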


Two seeds $(\mathbf{x},\tilde{B})$ and $(\mathbf{y},\tilde{C})$ are called \textit{mutation equivalent} if there is a sequence $(k_1,\ldots,k_r)$ of indices with $1\leq k_j\leq n$ for all $j$ such that $(\mu_{k_1}\circ\mu_{k_2}\circ\ldots\circ\mu_{k_r})(\mathbf{x},\tilde{B})=(\mathbf{y},\tilde{C})$. If $(\mathbf{x},\tilde{B})$ and $(\mathbf{y},\tilde{C})$ are two mutation equivalent seeds, then we also write $(\mathbf{x},\tilde{B})\sim(\mathbf{y},\tilde{C})$. By Proposition \ref{involution} this defines an equivalence relation on the set of all seeds.    

Let us fix a particular seed $(\mathbf{x},\tilde{B})$ which we refer to as \textit{initial seed}. Likewise, we call $\mathbf{x}$ the \textit{initial cluster}, the elements $x_1,\ldots,x_n$ of $\mathbf{x}$ \textit{initial cluster variables}, and $\tilde{B}$ the \textit{initial exchange matrix}. Put 
\begin{align*}
\mathcal{X}{(\mathbf{x},\tilde{B})}= \bigcup_{(\mathbf{y},\tilde{C}) \sim (\mathbf{x},\tilde{B})} \{y_1,\ldots,y_n\}
\end{align*} 
where the union is taken over all seeds $(\mathbf{y},\tilde{C})$ mutation equivalent to $(\mathbf{x},\tilde{B})$. The \textit{cluster algebra} $\mathcal{A}(\mathbf{x},\tilde{B})$ (of geometric type) associated with the seed $(\mathbf{x},\tilde{B})$ is defined as the $K$-subalgebra of $\mathcal{F}$ generated by 
\begin{align}
\label{GeneratingSet}
\mathcal{X}{(\mathbf{x},\tilde{B})}\cup\{x_{n+1}^{\pm 1},\ldots,x_{m}^{\pm 1}\}.
\end{align}
Put $L=K[x_{n+1}^{\pm 1},\ldots,x_m^{\pm 1}]$. By definition, $\mathcal{A}(\mathbf{x},\tilde{B})$ is naturally an $L$-algebra. As an $L$-algebra, it is generated by $\mathcal{X}{(\mathbf{x},\tilde{B})}$. 

We call the elements in $\mathcal{X}(\mathbf{x},\tilde{B})$ the \textit{cluster variables} of $\mathcal{A}(\mathbf{x},\tilde{B})$ and $x_{n+1},\ldots,x_m$ the \textit{frozen variables} of  $\mathcal{A}(\mathbf{x},\tilde{B})$. Furthermore, if $(\mathbf{y},\tilde{C}) \sim (\mathbf{x},\tilde{B})$, then we call $(\mathbf{y},\tilde{C})$ a \textit{seed} of $\mathcal{A}(\mathbf{x},\tilde{B})$, $\mathbf{y}$ a \textit{cluster} of $\mathcal{A}(\mathbf{x},\tilde{B})$, and expressions of the form $y_1^{a_1}\cdots y_m^{a_m}$ with integers $a_1,\ldots,a_m\geq 0$ \textit{cluster monomials}. Note that in this case we have $\mathcal{A}(\mathbf{x},\tilde{B})=\mathcal{A}(\mathbf{y},\tilde{C})$. 

Note that the frozen variables $x_{n+1},\ldots,x_m$ remain fixed under every mutation $\mu_k$ with $1\leq k\leq n$. Hence, they belong to every cluster of $\mathcal{A}(\mathbf{x},\tilde{B})$. They also admit an interpretation as \textit{coefficients} in the sense of Fomin-Zelevinsky \cite{FZ4}.

For every $1\leq j\leq n$ define a polynomial $f_j=\prod_{b_{ik} > 0} x_i^{b_{ik}} +\prod_{b_{ik} < 0} x_i^{-b_{ik}}$. Note that every $f_j$ is an element in the cluster algebra, but in this article we view it as an element $f_j\in K[x_1,\ldots,x_m]$. The exchange relations relating the initial seed and its neighboring seeds may be rewritten as $x_jx_j'=f_j$ for all $1\leq j\leq n$. Therefore, we call the polynomials $f_j$ the \textit{initial exchange polynomials}.

We also refer to an index $i\in\{1,2,\ldots,n\}$ as a \textit{mutable} index and to an index $j\in \{n+1,n+2,\ldots,m\}$ as a \textit{frozen} index. 

Define an unoriented simple graph $\Delta(\tilde{B})$ as follows: The vertex set is the set $\{1,2,\ldots ,m\}$ of all (mutable or frozen) indices and there is an edge between two vertices $i$ and $j$ with $i\geq j$ if and only if the entry $b_{ij}$ exists and is non-zero. We say that two (mutable or frozen) indices $i,j\in\{1,2,\ldots,m\}$ are \textit{adjacent} if they are adjacent in $\Delta(\tilde{B})$. Moreover, we say that $\tilde{B}$ is \textit{connected} if $\Delta(\tilde{B})$ is connected. Note that the connectedness of the exchange matrix is mutation invariant, i.e., if $\tilde{B}$ is connected, then $\mu_k(\tilde{B})$ is connected for all $1\leq k\leq n$. 

If, by chance, the principal part $B$ of the initial exchange matrix happens to be not only skew-symmetrizable, but skew-symmetric, then we may view $\tilde{B}$ as a signed adjacency matrix of a quiver $Q(\tilde{B})$ which $n$ mutable and $m-n$ frozen vertices. In this case, mutation of exchange matrices is the same as quiver mutation. In this case, we also write $\mathcal{A}(\mathbf{x},Q)$ for $\mathcal{A}(\mathbf{x},\tilde{B})$. Conversely, given a quiver $Q$, we denote by $B(Q)$ its signed adjacency matrix. Moreover, we usually denote the set of vertices of a quiver $Q$ by $Q_0$ and the set of arrows by $Q_1$. 

Finally, a mutable index $i\in\{1,2,\ldots,n\}$ is called a \textit{source} if $b_{ij}\geq 0$ for all $1\leq j\leq m$; it is called a \textit{sink} if $b_{ij}\leq 0$ for all $1\leq j\leq m$. Note that if the principal part $B$ is skew-symmetric, then the notions of sources and sinks agree with the corresponding notions for sources and sinks in the quiver $Q(\tilde{B})$. No index can be a source and a sink at the same time if $\tilde{B}$ is connected and $m\geq 2$.

\subsection{Main results on cluster algebras: The Laurent phenomenon and lower and upper bounds}
\label{BN2}
To continue our short survey, we now explain the main results on lower and upper bounds due to Berenstein-Fomin-Zelevinsky. The most striking result about cluster algebras is the \textit{Laurent phenomenon} according to which every cluster variable is not only a rational function but a Laurent polynomial in the variables of the initial cluster. Since every seed $(\mathbf{y},\tilde{C})$ of $\mathcal{A}(\mathbf{x},\tilde{B})$ can serve as an initial seed to construct the cluster algebra, the same statement is true for every cluster $\mathbf{y}$ of $\mathcal{A}(\mathbf{x},\tilde{B})$. To state the Laurent phenomenon more precisely let us put
\begin{align*}
&\mathcal{L}_{\mathbf{y}}=K[y_1^{\pm 1},\ldots,y_n^{\pm 1},y_{n+1}^{\pm 1},\ldots,y_m^{\pm 1}],\\
&\mathcal{L}_{\mathbf{y},\mathbb{Z}}=\mathbb{Z}[y_1^{\pm 1},\ldots,y_n^{\pm 1},y_{n+1}^{\pm 1},\ldots,y_m^{\pm 1}]
\end{align*}
for every cluster $\mathbf{y}$ of $\mathcal{A}(\mathbf{x},\tilde{B})$. The Laurent phenomenon asserts the following, see Fomin-Zelevinsky \cite[Theorem 3.1]{FZ}, \cite[Proposition 11.2]{FZ2}.
\begin{theorem}[Laurent phenomenon]
\label{LaurentPhenomenon}
Let $y$ be a cluster variable of $\mathcal{A}(\mathbf{x},\tilde{B})$. We have 
\begin{align*}
y\in  \bigcap_{(\mathbf{y},\tilde{C}) \sim (\mathbf{x},\tilde{B})} \mathcal{L}_{\mathbf{y},\mathbb{Z}}&&\textrm{and}&&
\mathcal{A}(\mathbf{x},\tilde{B}) \subseteq  \bigcap_{(\mathbf{y},\tilde{C}) \sim (\mathbf{x},\tilde{B})} \mathcal{L}_{\mathbf{y}}.
\end{align*}
\end{theorem}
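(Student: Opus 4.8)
The plan is to reduce the theorem to a single assertion — that every cluster variable is a Laurent polynomial with integer coefficients in the variables of the \emph{initial} cluster — and to prove that assertion by induction on mutation distance, with the unique factorization property of $\mathbb{Z}[x_1,\dots,x_m]$ as the engine. More precisely, I would first observe that it suffices to show: for every seed $(\mathbf{y},\tilde C)\sim(\mathbf{x},\tilde B)$ and every cluster variable $y$ of $\mathcal{A}(\mathbf{x},\tilde B)$ one has $y\in\mathcal{L}_{\mathbf{y},\mathbb{Z}}$. This is immediately the first displayed inclusion. For the second, note that $\bigcap_{(\mathbf{y},\tilde C)\sim(\mathbf{x},\tilde B)}\mathcal{L}_{\mathbf{y}}$ is a $K$-subalgebra of $\mathcal{F}$ (an intersection of subrings, each containing $K$); it contains every cluster variable (by the first inclusion, since $\mathcal{L}_{\mathbf{y},\mathbb{Z}}\subseteq\mathcal{L}_{\mathbf{y}}$) and every $x_l^{\pm 1}$ with $l>n$ (frozen variables lie in every cluster), hence it contains the $K$-algebra they generate, which by \eqref{GeneratingSet} is $\mathcal{A}(\mathbf{x},\tilde B)$. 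Finally, since mutation equivalence is an equivalence relation (Proposition \ref{involution}), any seed in the class may be taken as the initial one, so the boxed assertion is equivalent to the single statement that every cluster variable lies in $\mathcal{L}_{\mathbf{x},\mathbb{Z}}=\mathbb{Z}[x_1^{\pm 1},\dots,x_m^{\pm 1}]$.

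Next I would prove this statement by induction on $d(y)$, the least number of mutations taking $(\mathbf{x},\tilde B)$ to a seed containing $y$. For $d(y)\le 1$ it is clear: either $y=x_k$, or $y=x_k^{-1}\bigl(\prod_{b_{ik}>0}x_i^{b_{ik}}+\prod_{b_{ik}<0}x_i^{-b_{ik}}\bigr)$ for some $k$, which visibly lies in $\mathcal{L}_{\mathbf{x},\mathbb{Z}}$. Let $d(y)=d\ge 2$ and fix a shortest mutation path $\mathbf{x}\xrightarrow{\mu_i}\mathbf{x}^{(1)}\xrightarrow{\mu_j}\cdots$ to a seed containing $y$; by Proposition \ref{involution} a shortest path cannot backtrack (that would shorten it by two steps), so $j\neq i$. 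Applying the inductive hypothesis with $\mathbf{x}^{(1)}$ in the role of the initial cluster — the distance has dropped to $d-1$ — gives $y\in\mathcal{L}_{\mathbf{x}^{(1)},\mathbb{Z}}$; since $\mathbf{x}^{(1)}$ arises from $\mathbf{x}$ by replacing the single variable $x_i$ with $x_i^{-1}f_i$, substitution yields $y\in\mathcal{L}_{\mathbf{x},\mathbb{Z}}[f_i^{-1}]$. The crux is then to produce, from a second, suitably chosen mutation route out of $(\mathbf{x},\tilde B)$, a companion membership $y\in\mathcal{L}_{\mathbf{x},\mathbb{Z}}[g^{-1}]$ with $g$ a binomial sharing no irreducible factor with $f_i$. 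Granting this, and recalling that $\mathcal{L}_{\mathbf{x},\mathbb{Z}}$ is a unique factorization domain, a short $\gcd$ argument gives $\mathcal{L}_{\mathbf{x},\mathbb{Z}}[f_i^{-1}]\cap\mathcal{L}_{\mathbf{x},\mathbb{Z}}[g^{-1}]=\mathcal{L}_{\mathbf{x},\mathbb{Z}}$, so $y\in\mathcal{L}_{\mathbf{x},\mathbb{Z}}$ and the induction closes.

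The main obstacle is precisely the step just highlighted: choosing the second route so that the resulting localization is at an element coprime to $f_i$. When $i$ and $j$ are non-adjacent in $\tilde B$ the mutations $\mu_i$ and $\mu_j$ commute and the second route is the obvious one through $\mu_j(\mathbf{x})$, which gives $g=f_j$; when $i$ and $j$ are adjacent one is instead forced onto a longer alternating detour $\mu_i\mu_j\mu_i\cdots$ and must keep track of the exchange matrices along it, and this bookkeeping is exactly the content of Fomin-Zelevinsky's ``Caterpillar Lemma''. One must also check that the two binomials one inverts have no common irreducible factor; for a single binomial — a sum of two coprime monomials, hence divisible by no variable — this is elementary, but for the relevant pair it needs its own short argument. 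Integrality of the coefficients, on the other hand, is automatic throughout, since every exchange relation has constant term $1$ and everything takes place over $\mathbb{Z}$, and $\mathbb{Z}[x_1,\dots,x_m]$ — hence each localization $\mathcal{L}_{\mathbf{x},\mathbb{Z}}$ — is a unique factorization domain.
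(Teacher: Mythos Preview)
The paper does not prove this theorem; it is quoted from Fomin--Zelevinsky \cite[Theorem~3.1]{FZ}, \cite[Proposition~11.2]{FZ2} without argument, so there is no in-paper proof to compare against.

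Your reductions in the first paragraph are correct, and the broad mechanism --- two memberships in localizations of the UFD $\mathcal L_{\mathbf x,\mathbb Z}$ at coprime elements, then intersect --- is indeed what drives Fomin--Zelevinsky's argument. But your inductive scheme has a genuine gap. In the non-adjacent case you set $g=f_j$ and defer the coprimality of $f_i$ and $f_j$ to ``its own short argument''; no such argument exists in general. The very paper you are working in exhibits (Example~\ref{Dynkin}) acyclic seeds with $f_i=f_j$ for non-adjacent $i,j$ --- linear $A_3$ has $f_1=f_3=1+x_2$, and every $D_n$ likewise --- and the Laurent phenomenon of course holds there. For such seeds $\mathcal L_{\mathbf x,\mathbb Z}[f_i^{-1}]\cap\mathcal L_{\mathbf x,\mathbb Z}[f_j^{-1}]=\mathcal L_{\mathbf x,\mathbb Z}[f_i^{-1}]$ and your intersection step is vacuous.

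Fomin--Zelevinsky's Caterpillar Lemma is also not organised as a ``second route out of $t_0$'' and is not confined to the adjacent case. They go three steps \emph{forward}, $t_0\xrightarrow{\,i\,}t_1\xrightarrow{\,j\,}t_2\xrightarrow{\,i\,}t_3$, and show by an explicit computation that $y\in\mathcal L_{t_1}\cap\mathcal L_{t_2}\cap\mathcal L_{t_3}$ --- all three available from the inductive hypothesis --- forces $y\in\mathcal L_{t_0}$. The coprimality they need is between $f_i$ and the exchange polynomial for the step $t_2\to t_3$ rewritten in the $t_0$-variables, and this they verify unconditionally for cluster exchange patterns; it is a different statement from $\gcd(f_i,f_j)=1$ and does not fail in the examples above.
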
 
\noindent Following Berenstein-Fomin-Zelevinsky \cite[Section 1.3]{BFZ3} we introduce three $K$-algebras. They play a crucial role in the proof of the Laurent phenomenon. Put 
\begin{align*}
&\mathcal{L}(\mathbf{x},\tilde{B})=K[x_1,x_1',\ldots,x_n,x_n',x_{n+1}^{\pm 1},\ldots,x_m^{\pm 1}],\\
&\overline{\mathcal{A}}(\mathbf{x},\tilde{B})=\bigcap_{(\mathbf{y},\tilde{C}) \sim (\mathbf{x},\tilde{B})} \mathcal{L}_{\mathbf{y}},\quad
\mathcal{U}(\mathbf{x},\tilde{B})=\mathcal{L}_{\mathbf{x}}\cap\bigcap_{k=1}^n \mathcal{L}_{\mu_k(\mathbf{x})}.
\end{align*} 
Here, we use the notation $x_k'=\mu_k(\mathbf{x})_k$ for $1\leq k\leq n$. The $K$-algebras are called the \textit{lower bound}, the \textit{upper cluster algebra}, and the \textit{upper bound}. The choice of names reflects the fact that the inclusions $\mathcal{L}(\mathbf{x},\tilde{B})\subseteq \mathcal{A}(\mathbf{x},\tilde{B}) \subseteq \overline{\mathcal{A}}(\mathbf{x},\tilde{B}) \subseteq \mathcal{U}(\mathbf{x},\tilde{B})$ hold true. The first and the third inclusion are trivial and the second inclusion is a consequence of the Laurent phenomenon. 

Berenstein-Fomin-Zelevinsky \cite{BFZ3} also study under which conditions inclusions in the chain become equalities. We say that a seed $(\mathbf{y},\tilde{C})$ is \textit{coprime} if the initial exchange polynomials $f_k$ with $1\leq k\leq n$ are pairwise coprime elements in the ring $K[x_i\colon 1\leq i\leq m]$. Define a quiver $\Sigma(\tilde{C})$ with vertices $1,\ldots,n$ by drawing an arrow $i\to j$ between two vertices $i,j\in\{1,\ldots,n\}$ if and only if $c_{ij}>0$. We say that the seed $(\mathbf{y},\tilde{C})$ is \textit{acyclic} if the quiver  $\Sigma(\tilde{C})$ does not contain an oriented cycle. In this case, we also call the matrix $\tilde{C}$ also $\textit{acyclic}$. Additionally, we call the cluster algebra $\mathcal{A}(\mathbf{x},\tilde{B})$ \textit{acyclic} if at least one of its seeds is acyclic. Note that neither coprimality nor acyclicity is preserved under mutations: the coefficient-free cluster algebra associated with the quiver $1\rightarrow 2\rightarrow 3$ of type $A_3$ contains coprime, not coprime, acyclic, and not acyclic seeds.   

We summarize Berenstein-Fomin-Zelevinsky's results in the following theorem. References are Berenstein-Fomin-Zelevinsky \cite[Theorem 1.20, Proposition 1.8, Corollary 1.9, Theorem 1.18]{BFZ3}.

\begin{theorem}[Berenstein-Fomin-Zelevinsky]
\label{LowerUpperBound}
Let $K=\mathbb{Z}$. The following implications are true:
\begin{enumerate}
\item[(1)] If the seed $(\mathbf{x},\tilde{B})$ is acyclic, then the lower bound coincides with the cluster algebra, i.e., we have $\mathcal{L}(\mathbf{x},\tilde{B})=\mathcal{A}(\mathbf{x},\tilde{B})$.
\item[(2)] If $\operatorname{rank}(\tilde{B})=n$, then all seeds of $\mathcal{A}(\mathbf{x},\tilde{B})$ are coprime and the upper cluster algebra coincides with the upper bound, i.e., $\overline{\mathcal{A}}(\mathbf{x},\tilde{B})=\mathcal{U}(\mathbf{x},\tilde{B})$. 
\item[(3)] If the seed $(\mathbf{x},\tilde{B})$ is coprime and acylic, then all inclusion become equality, i.e., we have $\mathcal{L}(\mathbf{x},\tilde{B})=\mathcal{A}(\mathbf{x},\tilde{B})=\overline{\mathcal{A}}(\mathbf{x},\tilde{B})= \mathcal{U}(\mathbf{x},\tilde{B})$.
\end{enumerate}
\end{theorem}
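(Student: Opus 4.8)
The plan is to follow the argument of Berenstein--Fomin--Zelevinsky, whose technical core is a \emph{standard monomial} description of the lower and upper bounds. Call a monomial in the $2n$ variables $x_1,x_1',\dots,x_n,x_n'$ (with the frozen variables $x_{n+1}^{\pm1},\dots,x_m^{\pm1}$ allowed as coefficients) \emph{standard} if for no index $k$ do both $x_k$ and $x_k'$ occur in it. Since $x_kx_k'=f_k$ and $f_k\in K[x_1,\dots,x_m]$ involves neither $x_k$ nor any primed variable, any monomial in which $x_k$ and $x_k'$ both occur can be rewritten by substituting $f_k$ for a factor $x_kx_k'$; each such substitution strictly decreases the total degree $\sum_{k=1}^n\deg_{x_k'}$ in the primed variables, so the rewriting terminates in a standard form. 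Hence the standard monomials span $\mathcal L(\mathbf x,\tilde B)$ over $L$, with no hypothesis on the seed. Everything else is about promoting this spanning statement to a basis statement for $\mathcal U$.

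For (3), assume $(\mathbf x,\tilde B)$ is coprime and acyclic. First I would check that the standard monomials are $L$-linearly independent: expanding each one inside the Laurent polynomial ring $\mathcal L_{\mathbf x}$ and comparing leading terms with respect to a suitable term order kills any nontrivial relation. The substantial point is that the standard monomials also span the upper bound $\mathcal U(\mathbf x,\tilde B)=\mathcal L_{\mathbf x}\cap\bigcap_{k=1}^n\mathcal L_{\mu_k(\mathbf x)}$. Here one takes $y\in\mathcal U$, writes it as a Laurent polynomial in $\mathbf x$, and translates the requirement $y\in\mathcal L_{\mu_k(\mathbf x)}$ into divisibility of certain coefficient polynomials by powers of $f_k$; the coprimality of the family $f_1,\dots,f_n$ is precisely what allows these divisibility conditions to be imposed for different $k$ at once, and the acyclicity of $\Sigma(\tilde B)$ -- which provides a linear order on $\{1,\dots,n\}$ to induct along and keeps each $f_k$ free of $x_k$ -- organises the induction and forces the outcome into standard form. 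I expect this spanning step to be the main obstacle. Granting it, $\mathcal L(\mathbf x,\tilde B)=\mathcal U(\mathbf x,\tilde B)$, and since $\mathcal L\subseteq\mathcal A\subseteq\overline{\mathcal A}\subseteq\mathcal U$ always, the whole chain collapses, which is (3).

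I would then obtain (1) from (3) by a change-of-coefficients reduction. Given an acyclic seed $(\mathbf x,\tilde B)$, write $f_k=p_k+q_k$ with $p_k,q_k$ its two (coprime) monomials, adjoin $n$ new invertible frozen variables $z_1,\dots,z_n$, and enlarge $\tilde B$ by $n$ new frozen rows so that the new exchange polynomials are $f_k'=z_kp_k+q_k$. Each $f_k'$ is linear and primitive in $z_k$, hence irreducible, and distinct ones involve distinct new variables, so they are pairwise coprime; the principal part is unchanged, so the enlarged seed $(\mathbf x',\tilde B')$ is still acyclic. Thus (3) applies and gives $\mathcal A(\mathbf x',\tilde B')=\mathcal L(\mathbf x',\tilde B')$. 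The specialization $z_k\mapsto1$ is compatible with mutation and sends cluster variables to cluster variables, hence maps $\mathcal A(\mathbf x',\tilde B')$ onto $\mathcal A(\mathbf x,\tilde B)$ and $\mathcal L(\mathbf x',\tilde B')$ onto $\mathcal L(\mathbf x,\tilde B)$, whence $\mathcal A(\mathbf x,\tilde B)=\mathcal L(\mathbf x,\tilde B)$. (Alternatively one proves directly, by induction on mutation distance with a rank-two computation at the mutated index in the inductive step, that an acyclic cluster algebra is already generated over $L$ by $x_1,\dots,x_n,x_1',\dots,x_n'$, i.e.\ lies in $\mathcal L$.)

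Finally (2). The coprimality of all seeds is elementary: the Newton polytope of $f_k$ is the segment joining the exponent vectors of $p_k$ and $q_k$, whose difference is the $k$-th column of $\tilde B$; a common non-unit factor of $f_k$ and $f_\ell$ would, by additivity of Newton polytopes under multiplication, make columns $k$ and $\ell$ parallel, contradicting $\operatorname{rank}\tilde B=n$, and since the rank of the exchange matrix is mutation-invariant this applies to every seed. For $\overline{\mathcal A}(\mathbf x,\tilde B)=\mathcal U(\mathbf x,\tilde B)$, one shows -- this is the delicate part of (2) -- that under the coprimality of all seeds the upper bound is mutation-invariant, $\mathcal U(\mathbf x,\tilde B)=\mathcal U(\mu_k(\mathbf x),\mu_k(\tilde B))$ for every $k$, by a direct comparison of the two intersections of localizations. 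Iterating along the connected mutation graph gives $\mathcal U(\mathbf x,\tilde B)=\mathcal U(\mathbf y,\tilde C)\subseteq\mathcal L_{\mathbf y}$ for every seed $(\mathbf y,\tilde C)$, so $\mathcal U(\mathbf x,\tilde B)\subseteq\overline{\mathcal A}(\mathbf x,\tilde B)$; the reverse inclusion is part of the trivial chain, so the two coincide.
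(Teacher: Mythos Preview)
The paper does not give its own proof of this theorem: it is stated as a summary of results from \cite{BFZ3} with explicit references (Theorem~1.20, Proposition~1.8, Corollary~1.9, Theorem~1.18 there), and no argument is supplied here. So there is nothing in the present paper to compare your proposal against.

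That said, your outline is a faithful sketch of the Berenstein--Fomin--Zelevinsky argument itself. The standard-monomial spanning of $\mathcal L$, their linear independence via leading terms, and the hard step that under coprimality and acyclicity they also span $\mathcal U$ are exactly the content of \cite[Theorem~1.18 and its proof]{BFZ3}. Your derivation of (1) from (3) by enlarging the coefficient group to force coprimality and then specialising is the standard reduction; the alternative direct induction you mention is closer to how \cite{BFZ3} actually handles Theorem~1.20. For (2), the Newton-polytope argument for coprimality under the full-rank hypothesis and the mutation-invariance of the upper bound under coprimality of all seeds are precisely \cite[Proposition~1.8 and Corollary~1.9]{BFZ3}. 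Your sketch is correct in outline; the only place where real work hides is, as you flag, the spanning of $\mathcal U$ by standard monomials, which in \cite{BFZ3} occupies several pages of careful bookkeeping with the divisibility conditions imposed by each $\mathcal L_{\mu_k(\mathbf x)}$.
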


Theorem \ref{LowerUpperBound} implies that a cluster algebra over $K=\mathbb{Z}$ is equal to its upper cluster algebra as soon as it has at least one coprime and acyclic seed. Moreover, the theorem yields ring theoretic properties for these cluster algebras. In particular, if the seed $(\mathbf{x},\tilde{B})$ is acyclic, then $\mathcal{A}(\mathbf{x},\tilde{B})$ is finitely generated and hence Noetherian. 

However, Berenstein-Fomin-Zelevinsky \cite[Proposition 1.26]{BFZ3} show that there exists a cluster algebras with $\mathcal{A}(\mathbf{x},\tilde{B})\neq\overline{\mathcal{A}}(\mathbf{x},\tilde{B})$. The example is the so-called \textit{Markov cluster algebra} with initial exchange matrix
\begin{align*}
\tilde{B}=\begin{pmatrix}0&2&-2\\-2&0&2\\2&-2&0\end{pmatrix}.
\end{align*}
It is known that the Markov cluster algebra is neither Noetherian and nor finitely generated, see Muller \cite[Proposition 11.3]{M}. More recently, Muller \cite[Theorem 12.5]{M2} has found more examples of cluster algebras which do not equal their lower bounds. More precisely, if $\mathcal{A}(\mathbf{x},\tilde{B})$ is a cluster associated with a triangulated marked surface with exactly one boundary component with exactly one marked point, then $\mathcal{A}(\mathbf{x},\tilde{B})\neq\overline{\mathcal{A}}(\mathbf{x},\tilde{B})$.

The second part of Theorem \ref{LowerUpperBound} remains true in the general case where $K$ is an arbitrary field, see Gei\ss-Leclerc-Schr\"oer \cite[Theorem 1.2]{GLS}. The same is true for the first part of Theorem \ref{LowerUpperBound}, as every cluster variable $y$ is an element in $\mathcal{L}_{\mathbf{x},\mathbb{Z}}\subseteq \mathcal{L}_{\mathbf{x}}$. 

Gei\ss-Leclerc-Schr\"oer \cite[Section 1.2]{GLS} consider a more general class of cluster algebras. The authors fix an integer $p\in \{n+1,n+2,\ldots,m\}$ and remove the inverses $x_{p+1}^{-1},x_{p+2}^{-1},\ldots,x_{m}^{-1}$ from the generating set $(\ref{GeneratingSet})$ of a cluster algebra. We do not treat those cluster algebras because the first part of Theorem \ref{LowerUpperBound}, with is crucial for our further investigations, does not remain true in the general case where $p<m$. To construct a counterexample, put $n=p=2$, $m=3$ and $K=\mathbb{Q}$. Let $x_1,x_2$ be two algebraically independent variables and $\mathbf{x}=(x_1,x_2)$. Put 
\begin{align*}
\tilde{B}=\begin{pmatrix}0&1\\-1&0\\1&-1\end{pmatrix}.
\end{align*}
Then $\mathcal{A}(\mathbf{x},\tilde{B})$ is a cluster algebra of rank 2 generated by five cluster variables. The initial seed is acyclic. The non-initial cluster variables are 
\begin{align*}
x_1'=\frac{x_2+x_3}{x_1}, &&x_2'=\frac{x_1+x_3}{x_2}, &&z=\frac{x_1+x_2+x_3}{x_1x_2}.
\end{align*} 
It follows that $\mathcal{L}(\mathbf{x},\tilde{B})=\mathbb{Q}[x_1,x_1',x_2,x_2',x_3]\subseteq \mathbb{Q}[x_1^{\pm 1},x_2^{\pm 1},x_3]$ is generated by homogeneous elements of nonnegative degrees $\operatorname{deg}(x_1)=\operatorname{deg}(x_2)=\operatorname{deg}(x_3)=1$ and $\operatorname{deg}(x_1')=\operatorname{deg}(x_2')=0$. Hence, the lower bound does not contain the cluster variable $z$ which is homogeneous of degree $-1$ and is not equal to the cluster algebra $\mathcal{A}(\mathbf{x},\tilde{B})$. However, we expect the statement to be true when $\tilde{B}$ is \textit{completely acyclic}, i.e., when the graph obtained from $\Sigma(\tilde{B})$ by adding the vertices $n+1,n+2,\ldots,m$ and arrows corresponding to non-zero entries in $\tilde{B}$ between mutable and frozen indices does not contain oriented cycles.

\subsection{The cluster algebra as a ring I: irreducible elements and unique factorization domains}
\label{BN3}
Let us recall some definitions from ring theory. For this purpose let $R$ be a commutative unital ring. We say that an element $r\in R$ is \textit{invertible} if there exists an $s\in R$ such that $rs=1$. The subset of invertible elements is denoted by $R^{\times}\subseteq R$. Two elements $r,s\in R$ are called \textit{associated} if there exists an $t\in R^{\times}$ such that $r=st$. 

We say that $r\in R\backslash R^{\times}$ is \textit{irreducible} if the following implication is true: If $r=r_1r_2$ for some $r_1,r_2\in R$, then $r_1$ is invertible or $r_2$ is invertible. We say that an element $r\in R$ divides an element $s\in R$ if there exists an $t\in R$ such that $s=rt$. In this case we write $r\vert s$. Moreover, an element $r\in R$ is called \textit{prime} if the following implication is true: If $s,t\in R$ are elements such that $r\vert st$, then $r\vert s$ or $r\vert t$. It is a standard argument to show that every prime element is irreducible.

In the context of cluster cluster the case $m=1$ is special. Note that in this case $\mathcal{A}(\mathbf{x},\tilde{B})=K[x_1,\frac2{x_1}]$. As a Laurent polynomial ring it is a unique factorization domain for all fields $K$; it is also well-known to be a unique factorization domain for $K=\mathbb{Z}$. For $m>1$ the following theorem holds. 

\begin{theorem}[Gei\ss-Leclerc-Schr\"oer]
\label{GLS}Let $B$ be an integer $m\times n$ matrix with $m\geq2$. Let $\mathcal{A}(\mathbf{x},\tilde{B})$ be the cluster algebra associated with the seed $(\mathbf{x},\tilde{B})$. 
\begin{enumerate}
\item[(1)] The set of invertible elements is 
\begin{align*}
\mathcal{A}(\mathbf{x},\tilde{B})^{\times}=\{\lambda x_{n+1}^{a_n+1}\cdots x_m^{a_m}\colon \lambda\in K^{\times}, a_{n+1},\ldots,a_p\in\mathbb{Z}\}.
\end{align*}
\item[(2)] Every cluster variable of $\mathcal{A}(\mathbf{x},\tilde{B})$ is irreducible.
\end{enumerate}
\end{theorem}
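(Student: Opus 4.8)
The plan is to work in a suitable Laurent polynomial ring where the structure of units and irreducibles is transparent, and then transport the information back to $\mathcal{A}(\mathbf{x},\tilde{B})$ using the Laurent phenomenon. For part (1), the inclusion $\supseteq$ is immediate since each $x_j$ with $n+1\leq j\leq m$ is a unit in $L\subseteq\mathcal{A}(\mathbf{x},\tilde{B})$ and nonzero scalars are units. For the reverse inclusion, suppose $u\in\mathcal{A}(\mathbf{x},\tilde{B})^{\times}$, say $uv=1$. By Theorem \ref{LaurentPhenomenon} both $u$ and $v$ lie in $\mathcal{L}_{\mathbf{x}}=K[x_1^{\pm 1},\ldots,x_m^{\pm 1}]$, which is a Laurent polynomial ring and hence has unit group $\{\lambda x_1^{a_1}\cdots x_m^{a_m}:\lambda\in K^{\times},\ a_i\in\mathbb{Z}\}$. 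So $u=\lambda x_1^{a_1}\cdots x_m^{a_m}$ for some integers $a_i$ and $\lambda\in K^{\times}$. It remains to show $a_1=\cdots=a_n=0$. Here I would use a second cluster: for each mutable index $k$, $u$ also lies in $\mathcal{L}_{\mu_k(\mathbf{x})}$, and rewriting $x_k$ in terms of the cluster $\mu_k(\mathbf{x})$ via $x_k=f_k/x_k'$ shows that if $a_k\neq 0$ then $u$ acquires a genuine denominator or numerator involving $f_k$ and $x_k'$; more precisely, $u$ as a Laurent monomial in $x_1,\ldots,\widehat{x_k},\ldots,x_m,x_k'$ would have $f_k^{-a_k}$ as a factor, and since $f_k$ is not a monomial (it is a sum of two distinct monomials, using $m\geq 2$ and the fact that $B$ is sign-skew-symmetric so the two products in $f_k$ are genuinely different), this contradicts $u$ being a unit (hence a monomial) in $\mathcal{L}_{\mu_k(\mathbf{x})}$. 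Doing this for every $k$ forces all mutable exponents to vanish.

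For part (2), let $y$ be a cluster variable and suppose $y=ab$ with $a,b\in\mathcal{A}(\mathbf{x},\tilde{B})$. Choose a seed $(\mathbf{y},\tilde{C})$ containing $y$ as one of its cluster variables, say $y=y_k$. Then $\mathcal{A}(\mathbf{x},\tilde{B})=\mathcal{A}(\mathbf{y},\tilde{C})$, and by Theorem \ref{LaurentPhenomenon} applied with $(\mathbf{y},\tilde{C})$ as the initial seed, $a,b\in\mathcal{L}_{\mathbf{y}}=K[y_1^{\pm 1},\ldots,y_m^{\pm 1}]$. In this Laurent polynomial ring the element $y_k$ is irreducible (it is one of the Laurent variables), so one of $a,b$ — say $a$ — is a unit of $\mathcal{L}_{\mathbf{y}}$, i.e.\ $a=\lambda y_1^{c_1}\cdots y_m^{c_m}$. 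The task is then to upgrade this to: $a$ is a unit of $\mathcal{A}(\mathbf{x},\tilde{B})$, i.e.\ $c_1=\cdots=c_n=0$ so that $a\in\mathcal{A}(\mathbf{x},\tilde{B})^{\times}$ by part (1). I would argue as follows: from $ab=y_k$ we get $c_i\geq 0$ cannot all be forced, but $b=\lambda^{-1}y_k y_1^{-c_1}\cdots y_m^{-c_m}$ must itself lie in $\mathcal{A}(\mathbf{x},\tilde{B})$; now run the same cross-cluster argument as in part (1). For each mutable index $j\neq k$, express things in the cluster $\mu_j(\mathbf{y})$: if $c_j<0$ then $a$ has a factor $f_j^{-c_j}$ (with $f_j$ the exchange polynomial of $(\mathbf{y},\tilde{C})$ at $j$, a non-monomial), but $a$ divides $y_k$, which is a Laurent variable in $\mathcal{L}_{\mu_j(\mathbf{y})}$ and so has no non-monomial factors — contradiction; symmetrically if $c_j>0$ then $b$ has such a factor while $b$ divides $y_k$, same contradiction. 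For the index $j=k$ itself, if $c_k\neq 0$ then after substituting $y_k=f_k/y_k'$ one of $a,b$ carries a power of $f_k$ or $y_k'$ that the other cannot cancel (since $a$ times $b$ equals $y_k$, which in the cluster $\mu_k(\mathbf{y})$ equals $f_k/y_k'$, a product of two non-associated irreducibles), forcing the nontrivial factorization to be trivial; a short case check on $c_k$ settles it. Hence all mutable exponents of $a$ vanish, so $a$ is a unit of $\mathcal{A}(\mathbf{x},\tilde{B})$, proving $y$ irreducible.

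The main obstacle I anticipate is the bookkeeping in the cross-cluster step: one must be careful that the exchange polynomials $f_j$ really are non-monomial binomials (which uses $m\geq 2$ together with connectedness-type hypotheses; in the disconnected case one treats connected components separately, or notes that a column of zeros in $B$ makes the corresponding $f_j$ equal to $2$ — handled by the $m>1$ reduction and the remark on the $m=1$ case), and that when we rewrite a Laurent monomial from one cluster into an adjacent cluster the new "denominator" contribution is genuinely a power of $f_j$ and does not accidentally simplify. A clean way to organize this is to observe that for adjacent clusters $\mathbf{y}$ and $\mathbf{y}'=\mu_j(\mathbf{y})$ the overlap $\mathcal{L}_{\mathbf{y}}\cap\mathcal{L}_{\mathbf{y}'}$ consists exactly of those Laurent polynomials in $y_1,\ldots,\widehat{y_j},\ldots,y_m$ times Laurent polynomials in $y_j$ whose expansion stays polynomial in $y_j'=f_j/y_j$ — equivalently, a monomial $\lambda\prod y_i^{c_i}$ lies in this overlap only if $c_j=0$. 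Granting that lemma (which is elementary from unique factorization in Laurent polynomial rings and the fact that $f_j$ is coprime to every $y_i$, $i\neq j$), both parts follow smoothly.
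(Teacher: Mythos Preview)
The paper does not prove this theorem: it is quoted from Gei\ss--Leclerc--Schr\"oer \cite[Theorem~1.3]{GLS} and no argument is given here. So there is nothing in the present paper to compare your proposal against.

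That said, your strategy is the standard one and is essentially correct. Two points deserve tightening.

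First, your closing ``overlap lemma'' is stated too strongly. A Laurent monomial $\lambda\prod_i y_i^{c_i}$ with $c_j>0$ \emph{does} lie in $\mathcal{L}_{\mathbf{y}}\cap\mathcal{L}_{\mu_j(\mathbf{y})}$, because $y_j^{c_j}=f_j^{c_j}(y_j')^{-c_j}$ is then an honest Laurent polynomial in the mutated variables. What fails is the case $c_j<0$: one picks up $1/f_j^{|c_j|}$, and since $f_j$ is a non-monomial binomial it is a non-unit in $\mathcal{L}_{\mu_j(\mathbf{y})}$, so the expression is not Laurent. This one-sided conclusion is exactly what you need. For part~(1) apply it to both $u$ and $u^{-1}$ to force $c_j=0$. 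For part~(2) apply it to $a$ at every mutable $j$ to get $c_j\geq 0$, then to $b=\lambda^{-1}y_k\prod_i y_i^{-c_i}$ to get $-c_j\geq 0$ for $j\neq k$ and $1-c_k\geq 0$; hence $c_j=0$ for $j\neq k$ and $c_k\in\{0,1\}$, so one of $a,b$ is a unit. Your description of this step (``if $c_j<0$ then $a$ has a factor $f_j^{-c_j}$\ldots but $a$ divides $y_k$'') has the signs and the logic slightly tangled; the clean statement is simply that $a\in\mathcal{L}_{\mu_j(\mathbf{y})}$ forces $c_j\geq 0$.

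Second, the step ``$f_j$ is not a monomial'' needs the $j$-th column of the exchange matrix at that seed to be nonzero. When $\tilde{B}$ is connected this holds at every seed (connectedness is mutation-invariant, as the paper notes), but if some mutable vertex is isolated then $f_j=2$, and over a field $x_j$ genuinely becomes invertible, so part~(1) as literally stated would fail. This is a wrinkle in the hypotheses of the theorem as recorded here rather than a flaw in your argument; you were right to flag it.
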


Especially, $\mathcal{A}(\mathbf{x},\tilde{B})^{\times}/K^{\times}$ is a free abelian group of rank $m-n$. If $K=\overline{K}$ is an algebraically closed field and the seed $(\mathbf{x},\tilde{B})$ is acyclic (so that $\mathcal{A}(\mathbf{x},\tilde{B})$ is a finitely generated $K$-algebra), then this fact illustrates Samuel's theorem.  

The reference for Theorem \ref{GLS} is Gei\ss-Leclerc-Schr\"oer \cite[Theorem 1.3]{GLS}. In the same article the authors study further ring theoretic properties of cluster algebras. In particular, they investigate whether a given cluster algebra is a \textit{unique factorization domain}. Recall that a ring $R$ is a unique factorization domain if two statements hold: First, every element $r\in R$ can be written as a product $r=ur_1\cdots r_k$ for some $k\geq 0$, some irreducible elements $r_i\in R$, and some unit $u\in R^{\times}$. Second, the above factorization of $r$ is unique in the sense that if $r=vs_1\cdots s_l$ for some $l\geq 0$, some irreducible elements $s_i\in R$, and some unit $v\in R^{\times}$, then $k=l$ and there exists a bijection $\varphi\colon\{1,\ldots,k\}\to \{1,\ldots,k\}$ such that $r_i$ and $s_{\varphi(i)}$ are associated for all $1\leq i\leq k$. Other authors also use the term \textit{factorial ring} for a unique factorization domain. It is well-known that in a unique factorization domain every irreducible element is prime.

Not every cluster algebra is a unique factorization domain. Zelevinsky (cf. \cite[Section 1.9]{GLS}) asks whether $\mathcal{A}(\mathbf{x},\tilde{B})$ is a unique factorization domain if $\operatorname{rank}(\tilde{B})=n$. We answer the question in the negative.

\begin{ex} 
\label{NotUFD}
Let $\mathcal{A}(\mathbf{x},\tilde{B})$ be the coefficient-free cluster algebra associated with the Kronecker quiver over the complex numbers, i.e., $K=\mathbb{C}$, $m=n=2$, $\mathbf{x}=(x_1,x_2)$, and 
\begin{align*}
\tilde{B}=B=\begin{pmatrix}0&2\\-2&0\end{pmatrix}.
\end{align*}
Then the cluster variable $x_1$ is not a prime element. Since it irreducible it follows that $\mathcal{A}(\mathbf{x},\tilde{B})$ is not a unique factorization domain, although $\operatorname{rank}(\tilde{B})=2$. 
\end{ex}

\begin{proof}
The rank of $\tilde{B}$ clearly equals $2$. As above we put $x_k'=\mu_k(\mathbf{x})_k$ for $k\in\{1,2\}$. We have $x_1x_1'=1+x_2^2=(1+ix_2)(1-ix_2)$. Note that $1+ix_2,1-ix_2\in \mathcal{A}(\mathbf{x},\tilde{B})$. Suppose that $x_1$ is prime. Then we have $x_1\vert1+ix_2$ or $x_1\vert 1-ix_2$. Without loss of generality we may assume that $x_1\vert 1+ix_2$ which means that there exist some $s\in \mathcal{A}(\mathbf{x},\tilde{B})\subseteq \mathbb{C}[x_1^{\pm 1},x_2^{\pm 1}]$ such that $sx_1=1+ix_2$. Let us write $s=s_1+is_2$ for some $s_1,s_2\in\mathbb{R}[x_1^{\pm 1},x_2^{\pm 1}]$ and put $\overline{s}=s_1-is_2$. Note that $\overline{s}\in \mathcal{A}(\mathbf{x},\tilde{B})$ because the the generators $x_1,x_1',x_2,x_2' $ of $\mathcal{A}(\mathbf{x},\tilde{B})$ lie in $\mathbb{R}[x_1^{\pm 1},x_2^{\pm 1}]$. It follows that $\overline{s}x_1=1-ix_2$. Thus, we have $(s+\overline{s})x_1=2\in  \mathcal{A}(\mathbf{x},\tilde{B})^{\times}$ which yields $x_1\in  \mathcal{A}(\mathbf{x},\tilde{B})^{\times}$ which is impossible. 
\end{proof}

Gei\ss-Leclerc-Schr\"oer \cite[Section 6.1]{GLS} provide two general methods to construct cluster algebras which are not unique factorization domains. 
 
\begin{prop}[Gei\ss-Leclerc-Schr\"oer] 
\label{ReducibleImpliesNonFactorial}
If there is some $i\in\{1,\ldots,n\}$ such that $f_i$ is reducible in $K[x_i\colon 1\leq i\leq m]$, then $\mathcal{A}(\mathbf{x},\tilde{B})$ is not a unique factorization domain.
\end{prop}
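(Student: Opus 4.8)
The plan is to exploit the exchange relation $x_i x_i' = f_i$ together with the characterization of units in Theorem~\ref{GLS}(1). Suppose $f_i = gh$ is a nontrivial factorization in the polynomial ring $K[x_1,\ldots,x_m]$, so that neither $g$ nor $h$ is a constant (nor, more generally, a monomial in the frozen variables times a constant, which would already force $f_i$ to be divisible by a frozen variable — but observe that $f_i$, being a sum of two coprime monomials, is never divisible by any $x_j$, so in fact $g,h\notin \mathcal{A}(\mathbf{x},\tilde B)^{\times}$). Since $g$ and $h$ are honest polynomials in the cluster variables $x_1,\ldots,x_m$, they lie in $\mathcal{A}(\mathbf{x},\tilde B)$. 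Thus inside the cluster algebra we have $x_i \cdot x_i' = g\cdot h$, exhibiting $x_i x_i'$ with two genuinely different-looking factorizations into non-units.

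The core of the argument is then to show this contradicts unique factorization. First I would invoke Theorem~\ref{GLS}(2): both $x_i$ and $x_i'$ are cluster variables, hence irreducible in $\mathcal{A}(\mathbf{x},\tilde B)$. If the cluster algebra were a UFD, irreducibles would be prime, so from $x_i \mid gh$ we would get $x_i \mid g$ or $x_i \mid h$; say $x_i \mid g$, i.e.\ $g = x_i g'$ for some $g'\in \mathcal{A}(\mathbf{x},\tilde B)\subseteq \mathcal{L}_{\mathbf{x},\mathbb{Z}}$. Substituting back gives $x_i x_i' = x_i g' h$, and cancelling $x_i$ (legitimate in the domain $\mathcal{A}(\mathbf{x},\tilde B)$) yields $x_i' = g' h$. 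Now $x_i'$ is irreducible, so one of $g', h$ is a unit; but $h$ is a non-constant, frozen-variable-free polynomial, hence not a unit by Theorem~\ref{GLS}(1), forcing $g'$ to be a unit — again by Theorem~\ref{GLS}(1) this means $g'\in K^{\times}\cdot\{\text{monomials in }x_{n+1},\ldots,x_m\}$. Then $g = x_i g'$ would be a unit times a monomial involving $x_i$, contradicting that $g$ is a proper divisor of $f_i$ that is itself frozen-variable-free up to... — here one must be slightly careful, so let me instead phrase the contradiction directly: $g = x_i g'$ with $g'$ a unit would make $g$ an associate of $x_i$ in $K[x_1,\ldots,x_m]$, whence $f_i = gh$ would be divisible by $x_i$ in $K[x_1,\ldots,x_m]$, which is false since $f_i$ is a sum of two coprime monomials none of which is divisible by $x_i$ (indeed $b_{ii}=0$). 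The symmetric case $x_i\mid h$ is identical.

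I expect the main obstacle to be the bookkeeping around units: one must repeatedly translate ``$g$ is a non-unit of $\mathcal{A}(\mathbf{x},\tilde B)$'' into the concrete description from Theorem~\ref{GLS}(1), and check that the polynomial factors $g,h$ of $f_i$ — which a priori only live in $K[x_1,\ldots,x_m]$ and might conceivably involve frozen variables — cannot be units of the larger ring $\mathcal{A}(\mathbf{x},\tilde B)$. The clean way around this is the observation, used twice above, that $f_i$ itself is coprime to every variable $x_j$ (it is a binomial $M_1 + M_2$ with $M_1, M_2$ coprime monomials), so in $K[x_1,\ldots,x_m]$ the element $f_i$ has no variable among its divisors; hence any factor $g\mid f_i$ is likewise coprime to all $x_j$, so $g$ is a unit of $\mathcal{A}(\mathbf{x},\tilde B)$ only if $g\in K^{\times}$, i.e.\ $g$ is a constant. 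With that normalization in hand, ``$f_i$ reducible'' means precisely ``$f_i = gh$ with $g,h$ non-units of $\mathcal{A}(\mathbf{x},\tilde B)$'', and the divisibility argument above runs without friction. One final routine check: cancellation of $x_i$ is valid because $\mathcal{A}(\mathbf{x},\tilde B)$ is an integral domain (a subring of the field $\mathcal{F}$), which was noted earlier.
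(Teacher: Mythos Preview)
Your argument is correct and is precisely the ``natural generalization of Example~\ref{NotUFD}'' that the paper alludes to without spelling out (the paper gives no detailed proof here, referring instead to \cite{GLS}). Your handling of the unit bookkeeping---observing that $f_i$, being a binomial in coprime monomials, has no variable $x_j$ as a factor, so any polynomial divisor of $f_i$ lies in $\mathcal{A}(\mathbf{x},\tilde B)^{\times}$ only if it is a scalar---is exactly the point that needs care, and you treat it properly; the only cosmetic slip is calling $g$ ``an associate of $x_i$ in $K[x_1,\ldots,x_m]$'' when $g'$ is a priori only a unit of $\mathcal{A}(\mathbf{x},\tilde B)$, but your very next clause (``whence $f_i$ would be divisible by $x_i$'') is the correct contradiction and does not depend on that phrasing.
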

\begin{prop}[Gei\ss-Leclerc-Schr\"oer] 
\label{CoincidenceImpliesNonFactorial}
If there are two distinct indices $i,j\in\{1,\ldots,n\}$ such that $f_i=f_j$, then $\mathcal{A}(\mathbf{x},\tilde{B})$ is not a unique factorization domain.
\end{prop}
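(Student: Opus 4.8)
The plan is to argue by contradiction: assume $\mathcal{A}:=\mathcal{A}(\mathbf{x},\tilde{B})$ is a unique factorization domain, and show that the initial cluster variable $x_i$ is then not prime. This is the desired contradiction, because $x_i$ is irreducible by Theorem \ref{GLS}(2) and in a unique factorization domain every irreducible element is prime. (Since the distinct indices $i,j$ both lie in $\{1,\ldots,n\}$ we have $m\geq n\geq 2$, so Theorem \ref{GLS} applies.)

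First I would write down the two exchange relations $x_ix_i'=f_i$ and $x_jx_j'=f_j$ in $\mathcal{A}$, where $x_i'=\mu_i(\mathbf{x})_i$ and $x_j'=\mu_j(\mathbf{x})_j$ are again cluster variables of $\mathcal{A}$. Since $f_i=f_j$ these combine to $x_ix_i'=x_jx_j'$, so $x_i\mid x_jx_j'$; were $x_i$ prime, it would divide $x_j$ or $x_j'$. In the first case I would write $x_j=x_it$ with $t\in\mathcal{A}$, and since $x_j$ is irreducible (Theorem \ref{GLS}(2)) while $x_i$ is not a unit, the factor $t$ must be a unit; likewise in the second case $x_j'=x_is$ with $s$ a unit. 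By the description of the units in Theorem \ref{GLS}(1), such a unit is a Laurent monomial of the form $\lambda\prod_{l=n+1}^{m}x_l^{a_l}$ with $\lambda\in K^{\times}$. Thus I would be reduced to refuting the two equalities $x_j=\lambda x_i\prod_{l>n}x_l^{a_l}$ and $x_j'=\lambda x_i\prod_{l>n}x_l^{a_l}$.

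Both can be checked inside the Laurent polynomial ring $\mathcal{L}_{\mathbf{x}}=K[x_1^{\pm 1},\ldots,x_m^{\pm 1}]$, which contains $\mathcal{A}$. The right-hand side is a single Laurent monomial in which $x_i$ occurs with exponent $1$ and no other mutable variable occurs at all. The first equality fails because $x_j$ (with $j\neq i$ and $j\leq n$) is a monomial containing the mutable variable $x_j$. The second fails because $x_j'=x_j^{-1}\bigl(\prod_{b_{kj}>0}x_k^{b_{kj}}+\prod_{b_{kj}<0}x_k^{-b_{kj}}\bigr)$ is a Laurent polynomial with two distinct monomial summands whenever column $j$ of $\tilde{B}$ is nonzero, hence not a scalar multiple of a single monomial; and in the degenerate case of a zero column one has $f_j=2$, so $x_j'=2x_j^{-1}$, a monomial whose support is $\{j\}$, which (as $i\neq j$) still cannot equal $\lambda x_i\prod_{l>n}x_l^{a_l}$. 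Either way $x_i$ is not prime, so $\mathcal{A}$ is not a unique factorization domain.

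I expect the monomial comparisons at the end to be routine; the step that really matters is passing from ``$x_i$ divides $x_j$ or $x_j'$'' to ``$x_i$ is associated to $x_j$ or $x_j'$'', which is exactly where the explicit form of $\mathcal{A}^{\times}$ from Theorem \ref{GLS}(1) and the irreducibility of the cluster variables $x_j,x_j'$ are indispensable, together with the small separate check when an exchange polynomial degenerates to the constant $2$. The same reasoning shows symmetrically that $x_j$ is not prime, and that the common value $f_i=f_j$ admits the two essentially distinct irreducible factorizations $x_ix_i'$ and $x_jx_j'$ -- mirroring Example \ref{NotUFD} and the companion statement, Proposition \ref{ReducibleImpliesNonFactorial}.
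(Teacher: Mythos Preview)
Your argument is correct and follows exactly the approach the paper indicates: exploit the relation $x_ix_i'=f_i=f_j=x_jx_j'$ to exhibit two essentially different factorizations into irreducibles, which in your write-up is phrased contrapositively as showing $x_i$ cannot be prime. The case analysis (including the degenerate situation where a column of $\tilde{B}$ vanishes and $f_j=2$) is handled cleanly via the explicit description of units from Theorem~\ref{GLS}(1).
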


\noindent Proposition \ref{ReducibleImpliesNonFactorial} is a natural generalization of Example \ref{NotUFD}. The main idea for the proof of Proposition \ref{CoincidenceImpliesNonFactorial} is to consider the factorization $x_ix_i'=x_jx'_j$.

\subsection{Skew symmetric cluster algebras of finite type and hypersurfaces}
\label{BN4}
In this subsection we shall illustrate the above results by considering a remarkable class of coefficient-free skew-symmetric cluster algebra $\mathcal{A}(\mathbf{x},Q)$, namely those attached to orientations of Dynkin diagrams of type $A, D$ or $E$. This class is remarkable in two ways: A result by Fomin-Zelevinsky \cite[Theorem 1.4]{FZ2} asserts that $\mathcal{A}(\mathbf{x},Q)$ has only finitely many cluster variables if and only if it has a seed $(\mathbf{x}',Q')$ such that $Q'$ is an orientation of a Dynkin diagram of type $A, D$ or $E$. In this case we say that $\mathcal{A}(\mathbf{x},Q)$ is of \textit{finite type}. Thus, we obtain a Cartan-Killing classification for cluster algebras of finite type. The number of non-initial cluster variables equals the number of positive roots in the corresponding complex simple Lie algebra. Now let $\Delta$ be a Dynkin diagram of type $A, D$ or $E$. Another result by Fomin-Zelevinsky \cite[Theorem 1.5]{FZ2} asserts that the coefficient-free cluster algebras $\mathcal{A}(\mathbf{x},\overrightarrow{\Delta})$ are isomorphic for all orientations $\overrightarrow{\Delta}$ of $\Delta$. We also write $\mathcal{A}(\mathbf{x},\Delta)$ for $\mathcal{A}(\mathbf{x},\overrightarrow{\Delta})$.

The next example shows that Gei\ss-Leclerc-Schr\"oer's first criterion can be applied to $\mathcal{A}(\mathbf{x},A_3)$ and to $\mathcal{A}(\mathbf{x},D_n)$ for $n\geq 4$. 

\begin{ex} 
\label{Dynkin}
Put 
\begin{align*}
\tilde{B}=B=\begin{pmatrix}0&1&0\\-1&0&1\\0&-1&0\end{pmatrix}.
\end{align*}
Then $\mathcal{A}(\mathbf{x},\tilde{B})$ is a cluster algebra of type $A_3$ with initial seed $(\mathbf{x},\tilde{B})$. We have $f_1=f_3=1+x_2\in K[x_1,x_2,x_3]$, so by Proposition \ref{CoincidenceImpliesNonFactorial} $\mathcal{A}(\mathbf{x},A_3)$ cannot be a unique factorization domain. The same is true for the cluster of type $D_n (n\geq 4)$: Let $Q=\overrightarrow{D_n}$ be any orientation of the Dynkin diagram of type $D_n$, and $i,j\in Q_0$ be different neighbors of the vertex $l$ of degree $3$. Then $f_i=f_j=1+x_l$, so $\mathcal{A}(\mathbf{x},D_n)$ cannot be a unique factorization domain.   
\end{ex}

Propositions \ref{CoincidenceImpliesNonFactorial} and \ref{ReducibleImpliesNonFactorial} do not apply to  cluster algebras of the form $\mathcal{A}(\mathbf{x},Q)$ for a quiver $Q$ of type $A_n$ with $n\neq 1,3$ or $E_n$ with $n=6,7,8$. We will see later in Section \ref{Applications} that such a cluster algebra $\mathcal{A}(\mathbf{x},Q)$ actually is a unique factorization domain. 

For the moment we just make a remark on size of generating sets of the cluster algebra $\mathcal{A}(\mathbf{x},A_n)$. We have reduced the cardinality of a generating set of the cluster algebra twice -- from a priori infinitely many cluster variables to $n+\frac{n(n+1)}{2}$ cluster variables by finite type classification and then to $2n$ by acyclicity. The following argument shows that we can do better. In fact,  $\mathcal{A}(\mathbf{x},A_n)$ is generated by $n+1$ elements. Let $Q$ be the so-called linear orientation of the Dynkin diagram $A_n$, i.e., $Q_0=\{1,2,\ldots,n\}$ and $Q_1=\{i\rightarrow i+1 \colon 1\leq i\leq n-1\}$. Assume that $n\geq 2$. The cluster algebra is generated by $x_1,x_2,\ldots,x_n$ and $x_1',x_2',\ldots,x_n'$. The system 
\begin{align*}
x_1x_1&'=1+x_2\\
x_2x_2'&=x_1+x_3\\
x_3x'_3&=x_2+x_4\\
&\vdots\\
x_nx'_n&=1+x_{n-1}
\end{align*}
of relations enables us to successively replace $x_2,x_3,\ldots,x_n$ by a polynomial expression in the remaining variables. We see that $\mathcal{A}(\mathbf{x},Q)=K[x_1,x'_i\colon 1\leq i\leq n]/(P_n)$ for some polynomial $P_n\in K[x_1,x'_i\colon 1\leq i\leq n]$. For example, we have 
\begin{align*}
&P_2=x_1x_1'x'_2-x_1-x_2'-1,\\
&P_3=x_1x'_1x'_2x'_3-x'_2x'_3-x_1x'_3-x_1x'_1.
\end{align*}   
There is a recursion formula which enables us to compute all further polynomials. We have $P_n=x'_nP_{n-1}+x'_n-P_{n-2}-2$ for $n\geq 4$ which can be proved inductively. Note that every $P_n$ is an irreducible polynomial in $K[x_1,x'_i\colon 1\leq i\leq n]$, because $\mathcal{A}(\mathbf{x},A_n)$ is an integral domain.

\section{A criterion for acyclic cluster algebras to admit unique factorization}
\label{Description}

\subsection{Assumptions on the cluster algebra}
\label{Assumptions}
Let $m\geq n\geq 1$ be integers with $m\geq 2$ and let $\mathcal{A}(\mathbf{x},\tilde{B})$ be the cluster algebra associated with an initial seed $(\mathbf{x},\tilde{B})$; here $\tilde{B}$ is as usual an $m\times n$ integer matrix with a skew-symmetrizable principal part $B$ and $\mathbf{x}=(x_1,x_2,\ldots, x_m)$ is an initial cluster consisting of $m$ variables that are algebraically independent over $K$. Furthermore, in the rest of Section \ref{Description} we assume that the following conditions hold:
\begin{enumerate}
\item The matrix $\tilde{B}$ is connected.
\item The initial seed $(\mathbf{x},\tilde{B})$ is acyclic.
\item The polynomials $f_i$ with $1\leq i\leq n$ are pairwise coprime.
\item Every polynomials $f_i$ with $1\leq i\leq n$ is irreducible.
\end{enumerate}
Hence, the seed $(\mathbf{x},\tilde{B})$ is coprime and by Theorem \ref{LowerUpperBound} the cluster algebra $\mathcal{A}(\mathbf{x},\tilde{B})$ is equal to its lower and its upper bound. For ring theoretic studies the connectedness of $\tilde{B}$ is not strong assumption, because the cluster algebra associated with a seed with a disconnected exchange matrix is naturally isomorphic to a product of cluster algebras associated with seeds with connected initial exchange matrices. Together with the assumption $m\geq 2$ it ensures that no initial exchange polynomial is constant. 

\subsection{The definition of the ideals and their algebraic properties}
\label{Ideals}

The next definition introduces ideals that will be crucial for our ring theoretic study of the cluster algebra $\mathcal{A}(\mathbf{x},\tilde{B})$.

\begin{defn} For every $i\in\{1,2,\ldots,n\}$ define an ideal 
\begin{align*}
I_i=(x_i,f_i)\subseteq K[x_1,x_2,\ldots,x_m].
\end{align*}
\end{defn}
Before describing the significance of the ideals in the context of cluster algebras let us present some of their properties. Proposition \ref{OneIdeal} and Lemma \ref{Cancelling} will be especially helpful for all further discussions. 

Let us introduce the abbreviation $R=K[x_i\colon 1\leq i\leq m]$ for the polynomial ring. Moreover, for $1\leq i\leq n$ we abbreviate $K[x_1,\ldots,\widehat{x_i},\ldots,x_m]$ by $R_i$. Note that $f_i\in R_i$ since the diagonal entry $b_{ii}$ of $B$ is zero for all $1\leq i\leq n$.

\begin{prop}
\label{OneIdeal}
Let $1\leq i\leq n$ and let $a_i\in\mathbb{N}$. Let $P\in R$ be a polynomial which we write as $P=\sum_{k=0}^{\infty}P_kx_i^k$ for some polynomials $P_k\in R_i$. Then $P\in I_i^{a_i}$ holds if and only if the polynomial $f_i^{a_i-k}\in R_i$ divides the polynomial $P_k\in R_i$ for all $0\leq k\leq a_i$. 
\end{prop}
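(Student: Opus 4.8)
The plan is to prove both implications by induction on $a_i$, treating $i$ as fixed and writing $x=x_i$, $f=f_i\in R_i$, $I=I_i=(x,f)$ throughout. The base case $a_i=0$ is trivial since $I^0=R$ and the condition on the right is vacuous. For $a_i=1$ the statement reads: $P=\sum_k P_k x^k\in(x,f)$ if and only if $f\mid P_0$. The forward direction is immediate from reducing modulo $x$ (any element of $(x,f)$ becomes a multiple of $f$ in $R/(x)\cong R_i$); the reverse direction is clear since $P-P_0\in(x)$ and $P_0\in(f)$ by hypothesis. So the real content is the inductive step.

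For the inductive step, suppose the claim holds for $a_i-1$; I would prove it for $a_i$. First I would establish the key structural fact that $I^{a_i}=(x,f)^{a_i}=\sum_{j=0}^{a_i} (x^{a_i-j}f^{j})$, i.e. $I^{a_i}$ is generated by the monomials $x^{a_i-j}f^j$ for $0\le j\le a_i$; this is just the binomial expansion of the power of an ideal generated by two elements and uses nothing special. For the reverse implication (right-hand condition implies $P\in I^{a_i}$): if $f^{a_i-k}\mid P_k$ for all $0\le k\le a_i$, write $P_k=f^{a_i-k}Q_k$ for $k\le a_i$, so the partial sum $\sum_{k=0}^{a_i}P_k x^k = \sum_{k=0}^{a_i} Q_k x^k f^{a_i-k}\in I^{a_i}$ termwise, while the tail $\sum_{k>a_i}P_k x^k$ lies in $(x^{a_i+1})\subseteq(x)^{a_i}\cdot(x)\subseteq$ — more carefully, $(x^{a_i+1})\subseteq I^{a_i}$ since $x^{a_i+1}=x\cdot x^{a_i}$ and $x^{a_i}\in I^{a_i}$. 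Hence $P\in I^{a_i}$.

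For the forward implication (the harder and main direction): assume $P\in I^{a_i}\subseteq I^{a_i-1}$. By the inductive hypothesis, $f^{a_i-1-k}\mid P_k$ in $R_i$ for all $0\le k\le a_i-1$; I still need the \emph{stronger} divisibilities $f^{a_i-k}\mid P_k$ for $0\le k\le a_i$. Write $P_k=f^{a_i-1-k}Q_k$ for $0\le k\le a_i-1$ and set $S=\sum_{k=0}^{a_i-1}Q_k x^k f^{a_i-1-k}\in I^{a_i-1}$; then $P-S\in I^{a_i-1}$ has all its coefficients in degrees $<a_i$ divisible by $f^{a_i-1-k}$ — actually the cleaner move is: since $P\in I^{a_i}$, write $P=\sum_{j=0}^{a_i}x^{a_i-j}f^j g_j$ with $g_j\in R$, expand each $g_j$ in powers of $x$, collect the coefficient of $x^k$, and read off that $P_k$ is an $R_i$-linear combination of $f^{\max(j, a_i-k+? )}$... — to avoid the bookkeeping getting away from me, the right tool is to reduce modulo $x$: from $P\in I^{a_i}$ one gets $P\bmod(x)\in (f)^{a_i}$ in $R_i$, i.e. $f^{a_i}\mid P_0$; then apply the same to $(P-P_0)/x=\sum_{k\ge0}P_{k+1}x^k$, which lies in $I^{a_i-1}$ (since $P-P_0\in I^{a_i}\cap(x)$ and, crucially, $I^{a_i}\cap(x)=x\cdot I^{a_i-1}$ — this last identity is where I expect the real work to be, and it should follow from the fact that $x$ is a nonzerodivisor and $R_i=R/(x)$ is a domain so $f$ is a nonzerodivisor mod $x$), and conclude by induction on $a_i$ applied to this shifted polynomial. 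Iterating the "divide by $x$ and shift" operation $a_i$ times yields exactly $f^{a_i-k}\mid P_k$ for each $k$.

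The main obstacle is the identity $I^{a_i}\cap(x) = x\cdot I^{a_i-1}$ (equivalently, that the map "divide by $x$" sends $I^{a_i}\cap(x)$ onto $I^{a_i-1}$); all the divisibility bookkeeping is routine once this is in hand. I would prove it using that $R$ is a UFD (polynomial ring over a field or over $\mathbb Z$), that $x$ is irreducible hence prime, and that $x\nmid f$ in $R$ (since $f\in R_i$ involves only the other variables and is nonconstant by the connectedness and $m\ge2$ assumptions) — so $x$ and $f$ form a regular sequence, whence $(x,f)^{a_i}:x = (x,f)^{a_i-1}+(x):x$-type arguments, or more directly: grade $R=R_i[x]$ by $x$-degree and use that $\mathrm{gr}_{I}R$ is a polynomial ring over $R/I$ in the images of $x$ and $f$ because $x,f$ is regular. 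Alternatively, since we only need the statement coefficient-by-coefficient, one can argue entirely inside $R_i$ by comparing $x$-adic coefficients of the two sides of the generating-set description of $I^{a_i}$, which avoids invoking regular sequences at all and keeps the proof self-contained.
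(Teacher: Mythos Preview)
Your proposal is correct, but the route you settle on is considerably more elaborate than the paper's. The paper does exactly the direct coefficient comparison you started and then abandoned: write $P=\sum_{r=0}^{a_i}A_r x_i^r f_i^{a_i-r}$ with $A_r\in R$, expand each $A_r=\sum_{s\ge 0}A_{rs}x_i^s$ with $A_{rs}\in R_i$, and read off $P_k=\sum_{r+s=k}A_{rs}f_i^{a_i-r}$. Since every index $r$ in that sum satisfies $r\le k$, every term is divisible by $f_i^{a_i-k}$, and the forward implication is done in two lines. The bookkeeping you feared is essentially nonexistent; the ``$\max(j,a_i-k+?)$'' expression you hesitated over is just $a_i-r$ with $r\le k$.

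Your inductive approach via the colon-ideal identity $I^{a_i}\cap(x)=x\,I^{a_i-1}$ is valid and the identity does hold here (your regular-sequence reasoning is sound: $x_i\nmid f_i$ since $f_i\in R_i$, and both are nonzero in the domain $R$). But notice that the cleanest proof of that identity is itself the same coefficient-of-$x_i^0$ argument: if $x_i g=\sum_r A_r x_i^r f_i^{a_i-r}$, the $x_i^0$-coefficient forces $A_{00}=0$, whence $A_0\in(x_i)$ and $g\in I^{a_i-1}$. So your detour through induction and colon ideals ultimately rests on the same one-line computation the paper uses directly. The inductive framing buys nothing here; it would be more natural in a setting where the generators of $I$ were not as transparent, but for $I=(x_i,f_i)$ with $f_i\in R_i$ the direct expansion is strictly simpler.
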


\begin{proof} Assume that $P\in I_i^{a_i}$. By definition, $P=\sum_{r=0}^{a_i}A_rx_i^rf_i^{a_i-r}$ for some polynomials $A_r\in R$. Every $A_r$ can be written as a finite sum $A_r=\sum_{s= 0}^{\infty}A_{rs}x_i^s$ with polynomials $A_{rs}\in R_i$. Hence $P=\sum_{r,s}A_{rs}x_i^{r+s}f_i^{a_i-r}$ so that $P_k=\sum_{r+s=k}A_{rs}f_i^{a_i-r}$ is divisible by $f_i^{a_i-k}$ for all $0\leq k\leq a_i$. The reverse direction follows immediately.
\end{proof}

In particular, we have $x_i\notin I_j$ for $i\neq j$. For a polynomial such as $P\in R$ in Proposition \ref{OneIdeal} it is usual to denote part $P_k$ of degree $k$ with respect to $x_i$ by $[x_i^k]P$.

It follows that for every non-zero polynomial $P\in R$ and every $1\leq i\leq n$ there is a largest natural number $a_i\in\mathbb{N}$ such that $P\in I_i^{a_i}$. We define $m_i(P)$ to be the unique natural number such that $P\in I_i^{m_i(P)}\backslash I_{i}^{m_i(P)+1}$. Moreover, we define a momomial $$M(P)=\prod_{i=1}^nx_i^{m_i(P)}\in R.$$

\begin{rem}
\label{ViceVersa}
If, by chance, $f_i$ happens to have the form $f_i=x_l+M_i$ for some mutable or frozen vertex $l\neq i$, then $R=K[x_1,\ldots,\widehat{x_l},\ldots,x_m,f_i]$ and we may write every polynomial $P\in R$ in the form $P=\sum_{r=0}^{\infty} A_rf_i^r$ with $A_r\in K[x_1,\ldots,\widehat{x_l},\ldots,x_m]$. In this case, by the same argument as above, we have $P\in I_i^{a_i}$ for some $a_i\geq 0$ if and only if $x_i^{a_i-r}\vert P_r$ for all $0\leq r\leq a_i$.  
\end{rem}

\begin{prop}
\label{InitialPolyNotInOtherIdeal}
For all mutable indices $i\neq j$ the initial exchange polynomial $f_i$ is not an element in the ideal $I_j$.
\end{prop}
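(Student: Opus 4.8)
The plan is to use Proposition \ref{OneIdeal} with $a_i = 1$, which characterizes membership in $I_j$ concretely: writing $f_i = \sum_{k\geq 0} P_k x_j^k$ with $P_k \in R_j$, we have $f_i \in I_j$ if and only if $f_j \mid P_0$ and (trivially) $1 \mid P_k$ for $k \geq 1$; so the only real condition is that $f_j$ divides $P_0 = [x_j^0] f_i$, the part of $f_i$ not involving $x_j$. So it suffices to show that $f_j$ does \emph{not} divide the $x_j$-free part of $f_i$.

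First I would recall the shape of the exchange polynomials. For a mutable index $i$ we have $f_i = \prod_{b_{li}>0} x_l^{b_{li}} + \prod_{b_{li}<0} x_l^{-b_{li}}$, a binomial, and since the initial seed is acyclic the quiver $\Sigma(\tilde B)$ has no oriented cycle; in particular $i$ and $j$ are not both a source-to-the-other, so at most one of the two directed relations between $i$ and $j$ occurs. Concretely: either $x_j$ divides exactly one of the two monomials of $f_i$ (and does not divide the other), or $x_j$ appears in neither monomial of $f_i$. In the first case, write $f_i = x_j^{c} M' + M''$ where $c = |b_{ji}| \geq 1$, the monomial $M'$ is $x_j$-free, and $M''$ is the $x_j$-free monomial; then $P_0 = [x_j^0] f_i = M''$, a single monomial, i.e. $\pm 1$ times a product of powers of the $x_l$ with $l \neq i,j$. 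In the second case $P_0 = f_i$ itself, which is a binomial in variables other than $x_j$.

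Now I would argue $f_j \nmid P_0$ in either case. The key point is that $f_j$ is a genuine binomial of degree $\geq 1$ (here I use $m \geq 2$ and connectedness of $\tilde B$, which by the remark in Subsection \ref{Assumptions} force $f_j$ to be non-constant; more precisely $f_j$ has two distinct monomials, neither dividing the other). In the first case $P_0$ is a monomial, and a binomial with two nonzero monomials can never divide a monomial in the polynomial ring $R$ (a factor of a monomial is a monomial up to a unit). In the second case $P_0 = f_i$ is a binomial not involving $x_j$; if $f_j \mid f_i$ then, comparing degrees, $f_i = \lambda f_j$ for some $\lambda \in K^\times$, but $f_i$ and $f_j$ are coprime by assumption (3) and distinct by assumption (and both non-constant), so they are non-associate — contradiction. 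Hence $f_j \nmid P_0$ in all cases, so by Proposition \ref{OneIdeal}, $f_i \notin I_j$.

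The main obstacle is the bookkeeping in the first case: one must be careful that $x_j$ really divides at most one of the two monomials of $f_i$. This is exactly where acyclicity enters — if $\Sigma(\tilde B)$ had the $2$-cycle $i \to j \to i$ (equivalently $b_{ji}$ and $b_{ij}$ both nonzero with appropriate signs) one could in principle have $x_j$ dividing both terms, but sign-skew-symmetry already rules out $b_{ji}$ and $b_{ij}$ being simultaneously positive, and the definition of $f_i$ only records the sign pattern of the $i$-th column of $\tilde B$, so $x_j$ enters at most one monomial of $f_i$ regardless. I would spell this last point out carefully, then the rest is the routine monomial/degree argument above.
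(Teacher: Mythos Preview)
Your argument is correct and follows essentially the same route as the paper: reduce via Proposition~\ref{OneIdeal} to showing $f_j\nmid [x_j^0]f_i$, then split into the cases where $i,j$ are adjacent (so $[x_j^0]f_i$ is a single monomial, which the non-constant binomial $f_j$ cannot divide) and where they are not (so $[x_j^0]f_i=f_i$, and coprimality finishes). One small quibble: in the non-adjacent case your ``comparing degrees'' step is neither justified nor needed --- a binomial can divide a binomial of strictly larger degree (e.g.\ $1+x\mid 1+x^3$) --- but the coprimality assumption (3) already gives $f_j\nmid f_i$ directly, since $f_j$ is non-constant, which is exactly how the paper argues.
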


\begin{proof} We distinguish two cases. If $i$ and $j$ are connected, then $f_i=M_i+M_i'$ is a sum of two mononials one of which is divisible by $x_j$. It follows that $[x_j^0]f_i$ of $f_i$ is a monomial, and hence not divisible by $f_j$. By Proposition \ref{OneIdeal} we have $f_i\notin I_j$. If $i$ and $j$ are not connected, then $[x_j^0]f_i=f_i$. The coprimality of $f_i$ and $f_j$ implies that $f_i$ is not divisible by $f_j$. By the same argument as above we have $f_i\notin I_j$.  
\end{proof}

\begin{lemma}
\label{Cancelling}
Let $1\leq i\leq n$ and let $a_i\geq 1$ be a natural number. If $P,Q \in R$ are polynomials satisfying $PQ\in I_i^{a_i}$, then there exists some $0\leq b_i\leq a_i$ such that $P\in I_i^{b_i}$ and $Q\in I_i^{a_i-b_i}$.
\end{lemma}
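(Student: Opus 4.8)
The plan is to reduce everything to a divisibility statement in the polynomial ring $R_i = K[x_1,\ldots,\widehat{x_i},\ldots,x_m]$ via Proposition \ref{OneIdeal}, and then exploit that $R_i$ is a unique factorization domain together with the irreducibility of $f_i$ in $R_i$. Concretely, write $P = \sum_{k\geq 0} P_k x_i^k$ and $Q = \sum_{l\geq 0} Q_l x_i^l$ with $P_k, Q_l \in R_i$. Set $b_i = m_i(P)$, the largest exponent with $P \in I_i^{b_i}$, and let $c_i = m_i(Q)$. The goal is to show $b_i + c_i \geq a_i$; once that is known, Proposition \ref{OneIdeal} gives $P \in I_i^{b_i}$ and $Q \in I_i^{a_i - b_i}$ (using $I_i^{c_i} \subseteq I_i^{a_i-b_i}$ when $c_i \geq a_i - b_i$), which is exactly the claim. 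So the whole lemma hinges on the additivity estimate $m_i(PQ) \geq m_i(P) + m_i(Q)$ together with showing it is at least $a_i$ here, i.e. really on computing $m_i$ of a product.

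First I would translate the hypotheses using Proposition \ref{OneIdeal}: $P \in I_i^{b_i} \setminus I_i^{b_i+1}$ means $f_i^{b_i-k} \mid P_k$ for all $0 \le k \le b_i$ but $f_i^{b_i+1-k} \nmid P_k$ for some $k$; likewise for $Q$ with $c_i$. Now expand $PQ = \sum_m \big(\sum_{k+l=m} P_k Q_l\big) x_i^m$, so the degree-$m$ coefficient of $PQ$ in $x_i$ is $(PQ)_m = \sum_{k+l=m} P_k Q_l$. For $m \le b_i + c_i$ and any splitting $k+l = m$, at least one of $k \le b_i$ or $l \le c_i$ holds; in fact a short counting argument shows $f_i^{(b_i-k)_+} \mid P_k$ and $f_i^{(c_i-l)_+} \mid Q_l$ and $(b_i-k)_+ + (c_i-l)_+ \ge b_i + c_i - m$, so each summand $P_k Q_l$ is divisible by $f_i^{b_i + c_i - m}$. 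Hence $f_i^{b_i+c_i-m} \mid (PQ)_m$ for all $m \le b_i + c_i$, which by Proposition \ref{OneIdeal} means $PQ \in I_i^{b_i+c_i}$, i.e. $m_i(PQ) \ge b_i + c_i$.

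The remaining — and genuinely load-bearing — step is the reverse inequality $m_i(PQ) \le m_i(P) + m_i(Q)$, equivalently: $PQ \notin I_i^{b_i+c_i+1}$. This is where unique factorization and irreducibility of $f_i$ enter. After factoring out $f_i^{b_i}$ from suitable leading pieces of $P$ and $f_i^{c_i}$ from those of $Q$ (using Remark \ref{ViceVersa}-style bookkeeping, or working directly in $R_i[x_i]$), one reduces to the case $b_i = c_i = 0$: here one must show that if $f_i \nmid P_0$ (for the least index with this property, say) and $f_i \nmid Q_0$, then $f_i \nmid (PQ)_0 = \sum P_k Q_l$ for the appropriate lowest total degree — but the cross terms $P_k Q_l$ with $k,l \ge 1$ are not automatically controlled, so the cleanest route is to pass to the quotient $R_i/(f_i)$, which is an integral domain precisely because $f_i$ is irreducible in the UFD $R_i$. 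Reducing $P,Q \in R_i[x_i] = R$ modulo $(f_i)$ gives nonzero polynomials $\bar P, \bar Q \in (R_i/(f_i))[x_i]$ whose lowest-degree terms in $x_i$ correspond to the first non-$f_i$-divisible coefficients; their product is nonzero in the domain $(R_i/(f_i))[x_i]$, and tracking the lowest-degree term pins down $m_i(PQ)$ exactly. I expect this passage to the quotient domain, and correctly matching the valuation $m_i$ with the $x_i$-adic order after reduction mod $f_i$, to be the main obstacle; everything else is the routine divisibility bookkeeping sketched above.
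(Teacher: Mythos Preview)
Your overall strategy matches the paper's: set $k=m_i(P)$, $l=m_i(Q)$, and derive a contradiction from $PQ\in I_i^{k+l+1}$ using that $R_i/(f_i)$ is a domain. (The inequality $m_i(PQ)\geq k+l$ you establish first is correct but not needed.) The gap is precisely where you flag it, and neither concrete route you suggest for the load-bearing step actually works. Reducing $P$ modulo $(f_i)$ and reading off the $x_i$-order of $\bar P\in (R_i/(f_i))[x_i]$ does \emph{not} recover $m_i(P)$: the former equals $\min\{k:f_i\nmid P_k\}$ while by Proposition~\ref{OneIdeal} one has $m_i(P)=\min_k\bigl(k+v_{f_i}(P_k)\bigr)$, and these disagree already for $P=f_i+x_i^2$, where $m_i(P)=1$ but $\bar P=x_i^2$ has $x_i$-order $2$. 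Likewise, there is no honest ``factoring out $f_i^{b_i}$'' that reduces to $b_i=c_i=0$: in general $f_i^{b_i}\nmid P$, and dividing only selected coefficients $P_k$ by powers of $f_i$ is not a ring homomorphism, so it does not interact well with the product $PQ$.

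The paper avoids any such reduction and computes directly. From $P\in I_i^{k}$ one writes $P=\sum_{r=0}^{k}A_r f_i^{\,k-r}x_i^{\,r}$ with $A_r\in R$, then expands $A_r=\sum_{s\geq 0}A_{rs}x_i^{\,s}$ with $A_{rs}\in R_i$; similarly $Q=\sum_{t=0}^{l}B_t f_i^{\,l-t}x_i^{\,t}$ with $B_t=\sum_{u\geq 0}B_{tu}x_i^{\,u}$. Every term with $s\geq 1$ or $u\geq 1$ already lies in $I_i^{k+l+1}$, so
\[
PQ\equiv \sum_{0\leq r\leq k,\;0\leq t\leq l} A_{r0}B_{t0}\,f_i^{\,k+l-r-t}x_i^{\,r+t}\pmod{I_i^{k+l+1}}.
\]
If $PQ\in I_i^{k+l+1}$, Proposition~\ref{OneIdeal} forces $f_i\mid\sum_{r+t=N}A_{r0}B_{t0}$ for every $0\leq N\leq k+l$, i.e.\ the product $\bigl(\sum_r \bar A_{r0}T^r\bigr)\bigl(\sum_t \bar B_{t0}T^t\bigr)$ vanishes in the domain $(R_i/(f_i))[T]$; hence one factor vanishes, giving $P\in I_i^{k+1}$ or $Q\in I_i^{l+1}$, a contradiction. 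Conceptually this is the statement that the associated graded $\operatorname{gr}_{I_i}(R)\cong (R/I_i)[X,F]$ is a domain (since $x_i,f_i$ is a regular sequence and $R/I_i=R_i/(f_i)$ is a domain), so initial forms multiply. Your intuition ``reduce to $b_i=c_i=0$'' becomes correct only when reinterpreted as ``pass to initial forms in $\operatorname{gr}_{I_i}(R)$'' rather than as an operation inside $R$ itself.
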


\begin{proof} It follows from Proposition \ref{OneIdeal} that there exists a largest natural number $k$ such that $P\in I_i^k$. Similarly, let $l$ be the largest natural number such that $Q\in I_i^l$. We want to show that $k+l\geq a_i$. Assume on the contrary that $k+l+1\leq a_i$.

There are polynomials $A_r,B_t\in R$ for $0\leq r\leq k$ and $0\leq t\leq l$ such that 
\begin{align*}P=\sum_{r=0}^kA_rf_i^{k-r}x_i^r, &&Q=\sum_{t=0}^lB_tf_i^{l-t}x_i^t. 
\end{align*} 
Every $A_r$ can be written as a finite sum $A_r=\sum_{s=0}^{\infty}A_{rs}x_i^s$ with $A_{rs}\in R_i$. Similarly, for every polynomials $B_t$ there polynomials $B_{tu}\in R_i$ with $u\in\mathbb{N}$ such that $B_t$ can be written as $B_t=\sum_{u= 0}^{\infty}B_{tu}x_i^u$. In this notation the product is equal to
\begin{align*}PQ=\sum_{0\leq r\leq k,0\leq t\leq l\atop s,u\geq 0}A_{rs}B_{tu}f_i^{k+l-r-t}x_i^{r+s+t+u}.
\end{align*}
We conclude that 
\begin{align*}
PQ\equiv \sum_{0\leq r\leq k\atop0\leq t\leq l}A_{r0}B_{t0}f_i^{k+l-r-t}x_i^{r+t} &&(\textrm{mod} \ I_i^{k+l+1}). 
\end{align*}
Note that $A_{r0}B_{t0}f_i^{k+l-r-t}\in R_i$ for all $r,t\geq 0$ and that $PQ\in I_i^{a_i}\subseteq I_i^{k+l+1}$. By Proposition \ref{OneIdeal} we have $f_i \vert \sum_{r+t=N}A_{r0}B_{t0}$ for all $0\leq N\leq k+l$. Using the irreducibility of $f_i$ over $K$ it is a standard argument to show that either we have $f_i \vert A_{r0}$ for all $0\leq r\leq k$, in which case we get $P\in I_i^{k+1}$, or we have $f_i \vert B_{t0}$ for all $0\leq t\leq l$, in which case we get $Q\in I_i^{l+1}$. Contradiction.
\end{proof}

Besides being useful for the study of cluster algebras the ideals satisfy formidable algebraic properties. 

Recall from commutative algebra that an ideal $I\subseteq R=K[x_1,x_2,\ldots,x_m]$ is called a \textit{prime ideal} if $I\neq R$ and for all elements $P,Q\in R$ the following implication holds: If $PQ\in I$, then $P\in I$ or $Q\in I$. The \textit{radical ideal} of an ideal $I\subseteq R$ is $\sqrt{I}=\{r\in R\colon r^k\in R \textrm{ for some } k\in \mathbb{N}\}$. An ideal $I\subseteq R$ is called a \textit{primary ideal} if for all $P,Q\in R$ the following implication holds: If $PQ\in I$, then $P\in I$ or $Q\in\sqrt{I}$. In this case, the radical ideal $\sqrt{I}$ is a prime ideal and $I$ is also called $\sqrt{I}$-primary.

\begin{lemma}
The ideal $I_i$ is a prime ideal for all $1\leq i\leq n$.
\end{lemma}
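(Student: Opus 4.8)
The cleanest approach is to identify the quotient ring $R/I_i$ explicitly and observe that it is an integral domain. Since $f_i \in R_i = K[x_1,\dots,\widehat{x_i},\dots,x_m]$ (the diagonal entry $b_{ii}$ vanishes), the variable $x_i$ does not occur in $f_i$. Therefore $R/(x_i) \cong R_i$ via the obvious evaluation map $x_i \mapsto 0$, and under this isomorphism the image of $I_i = (x_i, f_i)$ is the principal ideal $(f_i) \subseteq R_i$. Hence
\begin{align*}
R/I_i \;\cong\; R_i/(f_i).
\end{align*}
So it suffices to show that $(f_i)$ is a prime ideal of $R_i$, equivalently that $f_i$ is a prime element of $R_i$.

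The key input is assumption (4): every $f_i$ is irreducible in $R = K[x_1,\dots,x_m]$. I would first upgrade this to irreducibility in $R_i$: since $f_i \in R_i$ and $R = R_i[x_i]$, a factorization of $f_i$ inside $R_i$ would also be a factorization inside $R$, and the units of $R_i$ are the units of $R$ (namely $K^\times$ when $K$ is a field), so irreducibility in $R$ forces irreducibility in $R_i$. (One should note the assumption $\operatorname{char}(K) = 0$, or $K = \mathbb{Z}$, guarantees $R_i$ is a unique factorization domain — a polynomial ring over a field or over $\mathbb{Z}$ — so that irreducible elements are prime.) Then $f_i$ irreducible in the UFD $R_i$ implies $f_i$ is prime in $R_i$, hence $(f_i)$ is a prime ideal of $R_i$, hence $I_i$ is a prime ideal of $R$. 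Finally $I_i \neq R$ because $f_i$ is non-constant (this uses connectedness of $\tilde B$ and $m \ge 2$, as remarked in Subsection~\ref{Assumptions}), so $1 \notin I_i$.

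Alternatively, and perhaps more in the spirit of the surrounding lemmas, one can argue directly via Proposition~\ref{OneIdeal}: writing $P = \sum_k P_k x_i^k$ and $Q = \sum_k Q_k x_i^k$ with $P_k, Q_k \in R_i$, membership $P \in I_i$ means $f_i \mid P_0$, i.e. $P \equiv 0$ modulo $(x_i, f_i)$ is detected solely by the constant-in-$x_i$ term. If $PQ \in I_i$, then $f_i \mid P_0 Q_0$ in $R_i$; since $f_i$ is prime in $R_i$, it divides $P_0$ or $Q_0$, giving $P \in I_i$ or $Q \in I_i$. This is essentially the $a_i = 1$ case of the reasoning already used in the proof of Lemma~\ref{Cancelling}. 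I expect no genuine obstacle here; the only point requiring a little care is the passage from irreducibility of $f_i$ in $R$ to its primality in $R_i$, which rests on $R_i$ being a UFD — valid precisely under the standing hypothesis on $K$.
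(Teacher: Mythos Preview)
Your proposal is correct. The paper's proof is precisely your second alternative: it simply invokes Lemma~\ref{Cancelling} with $a_i=1$, which immediately gives $P\in I_i$ or $Q\in I_i$ from $PQ\in I_i$ (since the only way to split $a_i=1$ as $b_i+(a_i-b_i)$ with nonnegative parts is $0+1$ or $1+0$).

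Your first approach, via the identification $R/I_i \cong R_i/(f_i)$, is a genuinely different and somewhat more self-contained route: it bypasses the explicit power-of-$x_i$ bookkeeping of Proposition~\ref{OneIdeal} and Lemma~\ref{Cancelling} in favour of the standard commutative-algebra fact that an irreducible element of a UFD is prime. The paper's route has the advantage of being a trivial specialisation of machinery already in place for the later primary-ideal and product-decomposition arguments; your quotient-ring route has the advantage of conceptual transparency and of making explicit exactly where the standing assumptions on $K$ (so that $R_i$ is a UFD) and on $f_i$ (irreducible, non-constant) enter. Both are short and valid.
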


\begin{proof}
Let $1\leq i\leq n$. Suppose that $P,Q\in R$ are polynomials with $PQ\in I_i$. Put $a_i=1$. Lemma \ref{Cancelling} yields $P\in I_i$ or $Q\in I_i$.
\end{proof}

\begin{lemma}
\label{primary}
The ideal $I_i^{a_i}$ is a primary ideal for all $1\leq i\leq n$ and all natural numbers $a_i\geq 1$. The radical ideal of $I_i^{a_i}$ is equal to $I_i$.
\end{lemma}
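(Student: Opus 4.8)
The plan is to reduce everything to the two facts already established about the ideals $I_i$: the ``layer'' description of $I_i^{a_i}$ in Proposition \ref{OneIdeal} and the cancellation property in Lemma \ref{Cancelling}. First I would show that $\sqrt{I_i^{a_i}}=I_i$. The inclusion $I_i^{a_i}\subseteq I_i$ gives $\sqrt{I_i^{a_i}}\subseteq\sqrt{I_i}=I_i$, using that $I_i$ is prime (hence radical) by the previous lemma. For the reverse inclusion, observe that $x_i^{a_i}\in I_i^{a_i}$ and $f_i^{a_i}\in I_i^{a_i}$, so both generators $x_i$ and $f_i$ of $I_i$ lie in $\sqrt{I_i^{a_i}}$; since $\sqrt{I_i^{a_i}}$ is an ideal, $I_i=(x_i,f_i)\subseteq\sqrt{I_i^{a_i}}$. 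Thus $\sqrt{I_i^{a_i}}=I_i$, which in particular is prime.

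Next I would verify the primary property directly from the definition. Suppose $P,Q\in R$ with $PQ\in I_i^{a_i}$ but $P\notin\sqrt{I_i^{a_i}}=I_i$. I must show $Q\in I_i^{a_i}$. By Lemma \ref{Cancelling} there is some $0\leq b_i\leq a_i$ with $P\in I_i^{b_i}$ and $Q\in I_i^{a_i-b_i}$. Since $P\notin I_i=I_i^{1}$, we cannot have $b_i\geq 1$, so $b_i=0$ and therefore $Q\in I_i^{a_i-0}=I_i^{a_i}$, as desired. This establishes that $I_i^{a_i}$ is $I_i$-primary.

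I do not expect a genuine obstacle here: the lemma is essentially a formal consequence of Lemma \ref{Cancelling} together with the fact that $I_i$ is prime, and the only mild point to be careful about is the logical manipulation in the definition of primary ideal (contrapositive of ``$P\in I$ or $Q\in\sqrt I$''). One could alternatively invoke the general commutative-algebra fact that a power of a maximal ideal is primary, or that $I^{a}$ is primary whenever $\sqrt{I^a}$ is maximal, but since $I_i$ is typically not maximal, the cleanest route is the direct argument above via Lemma \ref{Cancelling}, and that is what I would write.
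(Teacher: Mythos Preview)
Your proof is correct and the primary-ideal step via Lemma~\ref{Cancelling} is essentially identical to the paper's (you take the contrapositive, the paper argues by cases on $b_i$, but it is the same content). The one genuine difference is in computing the radical: the paper shows $\sqrt{I_i^{a_i}}\subseteq I_i$ by an induction on $k$ using Lemma~\ref{Cancelling} to peel off factors of $P$ from $P^k\in I_i^{a_i}$, whereas you simply observe $I_i^{a_i}\subseteq I_i$ and take radicals, using that $I_i$ is already known to be prime. Your argument is shorter and avoids invoking Lemma~\ref{Cancelling} for the radical computation; the paper's argument is more self-contained in that it re-derives the inclusion from the cancellation lemma rather than appealing to the preceding primality result.
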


\begin{proof}
Let $1\leq i\leq n$ and let $a_i\geq 1$ be a natural number. We start with the second statement $\sqrt{I_i^{a_i}}=I_i$. Suppose that $P\in I_i$. Then $P^{a_i}\in I_i^{a_i}$ and hence $P\in \sqrt{I_i^{a_i}}$. Conversely, suppose that $P\in \sqrt{I_i^{a_i}}$. Then there exists a natural number $k$ with $P^k\in I_i^{a_i}$. We prove by induction on $k\geq 1$ that $P^k\in I_i^{a_i}$ implies that $P\in I_i^l$ for some natural number $l \geq 1$. The claim implies that $P\in I_i$ since $I_i^l\subseteq I_i$. The case $k=1$ is trivial. Suppose that $k\geq 2$. Put $Q=P^{k-1}$. Lemma~\ref{Cancelling} implies that there exists $0\leq b_i\leq a_i$ such that $P\in I_i^{b_i}$ and $P^{k-1}\in I_i^{a_i-b_i}$. Note that $b_i\geq 1$ or $a_i-b_i\geq 1$. If $b_i\geq 1$, then we are done. Otherwise, we have $P^{k-1}\in I_i^{a_i}$ and we conclude by induction hypothesis.

For the first statement suppose that $P,Q\in R$ are polynomials with $PQ\in I_i^{a_i}$. By Lemma~\ref{Cancelling} we have $P\in I_i^{b_i}$ and $Q\in I_i^{a_i-b_i}$ for some $0\leq b_i\leq a_i$. If $b_i=a_i$, then $P \in I_i^{a_i}$. Otherwise, $a_i-b_i\geq 1$ and $Q\in I_i^{a_i-b_i}\subseteq I_i=\sqrt{I_i^{a_i}}$.
\end{proof}

Recall that two ideals $I,J\in R$ are called \textit{coprime} if $I+J=(1)=R$. In this case, a proposition from commutative algebra asserts that $I^a+J^b=(1)$ for all $a,b\geq 1$, i.e., the powers $I^a$ and $J^b$ are again coprime. Another well-known proposition asserts that $I\cap J=IJ$ for coprime ideals $I,J\subseteq R$. 

\begin{prop}
\label{coprime}
Suppose that $i\in\{1,2,\ldots,n\}$ is either a sink or a source and that the index $j\in\{1,2,\ldots,n\}$ is adjacent with $i$. Then the ideals $I_i$ and $I_j$ are coprime. 
\end{prop}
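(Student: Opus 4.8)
The plan is to prove the coprimality in the sharpest possible form: I will produce $1$ itself as an explicit $R$-linear combination of the generators of $I_i+I_j$, and in fact using only $f_i$ and $x_j$. The whole point is that the sink/source hypothesis makes $f_i$ almost trivial.

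First I would record what the hypothesis does to $f_i$. Writing out the definition $f_i=\prod_{b_{li}>0}x_l^{b_{li}}+\prod_{b_{li}<0}x_l^{-b_{li}}$ and using the definition of a source (resp.\ a sink) together with the sign-skew-symmetry of $B$, all entries $b_{li}$ with $1\le l\le m$ in the $i$-th column of $\tilde B$ are $\le 0$ (resp.\ $\ge 0$). In the source case the first product is then empty, so $f_i=1+\prod_{b_{li}<0}x_l^{-b_{li}}$; in the sink case, symmetrically, $f_i=\prod_{b_{li}>0}x_l^{b_{li}}+1$. Either way $f_i=1+M$ for a single monomial $M\in R_i$, and the two cases are interchanged by the substitution $b_{li}\mapsto -b_{li}$, so it suffices to treat, say, the source case.

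Next I would locate $x_j$ inside $M$. Since $j$ is adjacent to $i$ in $\Delta(\tilde B)$, the entry $b_{ji}$ is non-zero; combined with the sign information just obtained ($b_{ji}<0$ in the source case, $b_{ji}>0$ in the sink case), the variable $x_j$ occurs with strictly positive exponent in the non-constant monomial $M$. Hence $M\in(x_j)\subseteq I_j$. Now the identity $1=f_i-M$ exhibits $1$ as the difference of an element of $(f_i)\subseteq I_i$ and an element of $(x_j)\subseteq I_j$, so $1\in I_i+I_j$, i.e.\ $I_i+I_j=R$, which is the assertion.

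I do not expect a serious difficulty here. The only point that needs a little care is the sign bookkeeping — checking that it is precisely the ``constant'' monomial of $f_i$ that degenerates to $1$, while $x_j$ survives in the other — but this is immediate once the definitions of source, sink and adjacency are unwound. Note that the argument uses neither the coprimality nor the irreducibility of the initial exchange polynomials, and does not involve $f_j$ at all; only the sink/source condition on $i$ and the adjacency of $j$ to $i$ enter.
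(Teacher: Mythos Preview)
Your argument is correct and is essentially identical to the paper's own proof: the paper also observes that the sink/source hypothesis forces $f_i=1+M_i$ for a monomial $M_i$, that adjacency gives $x_j\mid M_i$, and hence $1=f_i-M_i\in I_i+I_j$. You have merely spelled out the sign bookkeeping that the paper leaves implicit.
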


\begin{proof} Note that $f_i=1+M_i$ for some monomial $M_i\in R$. The adjacency of $i$ and $j$ implies $x_j\vert M_i$, so that $M_i\in I_j$. Hence $1=f_i-M_i\in I_i+I_j$, so that $I_i$ and $I_j$ are coprime.    
\end{proof}

\subsection{A description of the cluster algebra}
\label{des}
In this subsection we study the cluster algebra $\mathcal{A}(\mathbf{x},\tilde{B})$ from a ring theoretical point of view. The Laurent phenomenon (see Theorem \ref{LaurentPhenomenon}) and the equality of the cluster algebra with its lower bound (see Theorem \ref{LowerUpperBound}) imply that $\mathcal{A}(\mathbf{x},\tilde{B})$ may be described as a union: 

\begin{rem}
\label{Union}
We have
\begin{align*}
\mathcal{A}(\mathbf{x},\tilde{B})=\bigcup_{\mathbf{a}\in\mathbb{N}^n} \left\{ \frac{\lambda P}{x_1^{a_1}\cdot x_2^{a_2}\cdots x_n^{a_n}} \colon P\in I_1^{a_1}\cdot I_2^{a_2}\cdots I_n^{a_n},\lambda \in \mathcal{A}(\mathbf{x},\tilde{B})^{\times} \right\}.
\end{align*}
\end{rem} 
Let us introduce the shorthand multi-index notation $I^{\mathbf{a}}=I_1^{a_1}I_2^{a_2}\cdots I_n^{a_n}$ for a sequence $\mathbf{a}=(a_1,a_2,\ldots,a_n)\in \mathbb{N}^n$. Similarly, write $x^{\mathbf{a}}=x_1^{a_1}x_2^{a_2}\cdots x_n^{a_n}$ and $f^{\mathbf{a}}=f_1^{a_1}f_2^{a_2}\cdots f_n^{a_n}$ for a sequence $\mathbf{a}$ as above. Moreover, we denote by $\geq$ the \textit{product order} on $\mathbb{N}$, i.e., we write $\mathbf{a}\geq\mathbf{b}$ whenever two sequences $\mathbf{a},\mathbf{b}\in \mathbb{N}^n$ satisfy $a_i\geq b_i$ for all $1\leq i\leq n$.

\subsection{A conjectured primary decomposition}
\label{conjecture}
Recall that if $I,J\subset R$ are two ideals, then their product $I\cdot J$ is a subset of their intersection $I\cap J$, i.e., $I\cdot J\subseteq I\cap J$, but the sets are not equal in general. We conjecture the following.

\begin{conj}
\label{IdealProperties}
For all $\mathbf{a}\in\mathbb{N}^n$ we have $I^{\mathbf{a}}=I_1^{a_1}\cap I_2^{a_2}\cap\ldots\cap I_n^{a_n}$.
\end{conj}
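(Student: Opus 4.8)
The inclusion $I^{\mathbf{a}}\subseteq I_1^{a_1}\cap\cdots\cap I_n^{a_n}$ is automatic, so the content is the reverse inclusion. The plan is to argue by induction on the number of nonzero entries of $\mathbf{a}$, reducing everything to the case of two powers, i.e.\ to the statement that $I_i^{a_i}\cap I_j^{a_j}=I_i^{a_i}I_j^{a_j}$ for $i\neq j$. Granting this two-factor statement, suppose $P\in I_1^{a_1}\cap\cdots\cap I_n^{a_n}$. By Lemma \ref{Cancelling} applied repeatedly (or directly by Proposition \ref{OneIdeal}), membership of $P$ in the intersection of the first $n-1$ powers will, by induction, give $P\in I_1^{a_1}\cdots I_{n-1}^{a_{n-1}}=:J$; combined with $P\in I_n^{a_n}$ and the two-factor case applied to the pair $(J, I_n^{a_n})$ — noting that $J$ and $I_n$ behave coprimely enough, or re-running the $x_n$-degree analysis — one concludes $P\in J\cdot I_n^{a_n}=I^{\mathbf{a}}$. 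So the crux is genuinely the two-variable statement.

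For the two-factor case I would use the explicit description from Proposition \ref{OneIdeal} in the variable $x_i$. Write $P=\sum_k P_k x_i^k$ with $P_k\in R_i$. Since $P\in I_i^{a_i}$, Proposition \ref{OneIdeal} says $f_i^{a_i-k}\mid P_k$ for $0\le k\le a_i$, so write $P_k=f_i^{a_i-k}Q_k$; then $P=\sum_k f_i^{a_i-k}Q_k x_i^k\in I_i^{a_i}$ manifestly, and I want to show the extra hypothesis $P\in I_j^{a_j}$ forces $\sum_k f_i^{a_i-k}Q_k x_i^k$ to lie in $I_j^{a_j}$ already as an element of $R_i[x_i]$ with coefficients pushed into $I_j^{a_j}$. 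The key observations are: $f_i\notin I_j$ (Proposition \ref{InitialPolyNotInOtherIdeal}), $x_i\notin I_j$ (remark after Proposition \ref{OneIdeal}), and $I_j$ is prime (so $I_j^{a_j}$ is $I_j$-primary by Lemma \ref{primary}). Working modulo $I_j$ first: $P\in I_j$ and $f_i,x_i$ are units in $R/I_j$ (they are nonzero and $R/I_j$ is a domain — actually one needs them invertible, which holds because $I_j$ is prime and $f_i,x_i\notin I_j$ only gives nonzero, so instead argue coefficient-wise), forcing each $Q_k\in I_j$. Then one bootstraps: reducing modulo $I_j^2$, using $f_i^{a_i-k}x_i^k$ as a "test basis" and the primary property, forces each $Q_k\in I_j^2$ (the monomials $x_i^k$ are linearly independent over $R_i$, and $f_i^{a_i-k}$ is a nonzerodivisor mod any power of $I_j$ since $f_i\notin I_j$ and $I_j^b$ is $I_j$-primary). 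Iterating up to $I_j^{a_j}$ gives $Q_k\in I_j^{a_j}$ for all $k$, hence $P=\sum_k f_i^{a_i-k}Q_k x_i^k\in I_i^{a_i}I_j^{a_j}$, as desired.

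I expect the main obstacle to be the bootstrapping step, i.e.\ upgrading "$Q_k\in I_j^t$ for all $k$" to "$Q_k\in I_j^{t+1}$ for all $k$". The honest difficulty is that $P\in I_j^{a_j}$ does not immediately factor through the individual coefficients $Q_k$: it says $\sum_k f_i^{a_i-k}Q_k x_i^k\in I_j^{a_j}$, and since $x_i\notin I_j$ one would like to "divide by $x_i$-degree," but $I_j$ is not $(x_i)$-homogeneous. The clean way around this is to invoke that $R/I_j^t$ has $x_i$ acting as a nonzerodivisor — equivalently $x_i\notin\sqrt{I_j^t}=I_j$ — so that the associated graded with respect to the $x_i$-adic filtration still detects membership; concretely, reduce modulo $(x_i,I_j^{a_j})$, then modulo $(x_i^2,I_j^{a_j})$, etc., peeling off one $x_i$-degree at a time, at each stage using that $f_i$ is a nonzerodivisor modulo $I_j^t$ (again because $f_i\notin I_j=\sqrt{I_j^t}$ and $I_j^t$ is primary, so its associated primes are just $I_j$). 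This is where the hypotheses of Subsection \ref{Assumptions} — irreducibility and coprimality of the $f_i$, feeding into Propositions \ref{InitialPolyNotInOtherIdeal} and \ref{coprime} — are really used. If the pair $(i,j)$ happens to be a sink/source-adjacent pair, Proposition \ref{coprime} makes $I_i,I_j$ outright coprime and then $I_i^{a_i}\cap I_j^{a_j}=I_i^{a_i}I_j^{a_j}$ is the standard coprimality fact, so the remaining work is exactly the non-coprime adjacent case and the non-adjacent case, both handled by the filtration argument above.
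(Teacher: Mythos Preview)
This statement is labeled a \emph{conjecture} in the paper for a reason: it is not proved in general. The paper only verifies it for specific families (rank~$2$, linear $A_n$ with $n\neq 1,3$, and $E_{6,7,8}$) using the structural Lemmata~\ref{SinksNSources} and~\ref{Free}, which exploit sources/sinks and leaves of the quiver. Your proposal, by contrast, claims a uniform proof under the standing assumptions of Subsection~\ref{Assumptions}; it contains a genuine gap.

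The decisive error is the assertion that, writing $P=\sum_k f_i^{a_i-k}Q_k\,x_i^k$ via the $x_i$-expansion, the hypothesis $P\in I_j^{a_j}$ forces each $Q_k\in I_j^{a_j}$. This is false already for $a_i=a_j=1$. Take the linear $A_4$ quiver $1\to 2\to 3\to 4$, so $f_2=x_1+x_3$ and $f_3=x_2+x_4$, and set
\[
P \;=\; x_1x_2+x_1x_4+x_3x_4.
\]
Then $[x_2^0]P=(x_1+x_3)x_4=f_2\cdot x_4$ and $[x_3^0]P=x_1(x_2+x_4)$, so $P\in I_2\cap I_3$. In your notation $Q_0=x_4$, but $x_4\notin I_3=(x_3,x_2+x_4)$ (apply Proposition~\ref{OneIdeal}). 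Nevertheless $P\in I_2I_3$, since $P=(x_1+x_3)(x_2+x_4)-x_2x_3$. Thus the product--intersection equality can hold while your coefficient-by-coefficient mechanism fails; you are trying to prove a strictly stronger (and false) statement. The underlying reason your ``peeling by $x_i$-degree'' breaks is that $I_j$ is not closed under taking the $x_i$-constant term when $i$ and $j$ are adjacent: indeed $f_j\in I_j$ but $[x_i^0]f_j$ is a monomial not in $I_j$. So knowing $x_i$ is a nonzerodivisor on $R/I_j^{t}$ is not enough to separate the $Q_k$.

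A second, independent gap is the reduction step: you apply the ``two-factor case'' to the pair $(J,I_n^{a_n})$ with $J=I_1^{a_1}\cdots I_{n-1}^{a_{n-1}}$, but $J$ is not of the form $I_i^{a}$, so nothing you proved about pairs $I_i^{a_i},I_j^{a_j}$ transfers. The vague phrase ``behave coprimely enough'' does not repair this; compare the cyclic $A_3$ example in Subsection~\ref{conjecture}, where all pairwise two-factor statements are fine but the triple intersection differs from the product. Any correct argument must genuinely use acyclicity at this stage, as the paper does via the source/sink and leaf lemmata.
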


By Lemma \ref{primary} the ideals $I_i^{a_i}$ are primary ideals for all $a_i\in\mathbb{N}$, so that the right hand side would be \textit{primary decomposition} of the ideal $I^{\mathbf{a}}$ for all $\mathbf{a}\in\mathbb{N}^n$. As we will see in the next subsection, the above conjecture implies that the cluster algebra $\mathcal{A}(\mathbf{x},\tilde{B})$ is a unique factorization domain. 

\begin{rem} 
\label{Union2}
If Conjecture \ref{IdealProperties} is true, then the union in Remark \ref{Union} becomes
\begin{align*}
\mathcal{A}(\mathbf{x},\tilde{B})=&\bigcup_{\mathbf{a}\in\mathbb{N}^n} \left\{\frac{\lambda P}{x_1^{a_1}\cdot x_2^{a_2}\cdots x_n^{a_n}}\colon P\in I_1^{a_1}\cap I_2^{a_2} \cap \ldots \cap I_n^{a_n},\lambda \in \mathcal{A}(\mathbf{x},\tilde{B})^{\times}\right\}.
\end{align*}
\end{rem}

\begin{rem}
Let $1\leq i\leq n$. Assume that for the sequence $\mathbf{a}\in\mathbb{N}^n$ we have $a_i\neq 0$ and that $I_1^{a_1}\cap I_2^{a_2}\cap\ldots\cap I_n^{a_n}$ contains some polynomial $x_iP$ which is divisible by $x_i$. We have $x_i\in I_i\backslash I_i^2$ and $x_i\notin I_j$ for $j\neq i$. Therefore, by Lemma~\ref{Cancelling} we have $P\in I_1^{a'_1}\cap I_2^{a_2'}\cap\ldots \cap I_n^{a'_n}$ for the sequence $\mathbf{a}'\in\mathbb{N}^n$ with $a_i'=a_i-1$ and $a'_j=a_j$ for $j\neq i$. In other words, the description of cluster algebra elements in the previous remark behaves well with respect to canceling a common factor $x_i$ in both the numerator and the denominator. 
\end{rem}

\begin{rem}
For all $\mathbf{a}\in\mathbb{N}^n$ define a set 
\begin{align*}S(\mathbf{a})=&\left\{P\in I_1^{a_1}\cap I_2^{a_2} \cap \ldots \cap I_n^{a_n} \colon x_i \nmid P \ {\rm if} \ 1\leq i\leq n, a_i\neq 0\right\}
\end{align*}
Especially, we have $R=S(0)$. If $I^{\mathbf{a}}=I_1^{a_1}\cap I_2^{a_2}\cap\ldots\cap I_n^{a_n}$ holds for all $\mathbf{a}\in\mathbb{N}^n$, then application of Lemma \ref{Cancelling} with $Q=x_i$ yields a decomposition 
\begin{align*}
\mathcal{A}(\mathbf{x},\tilde{B})=\bigcup_{\mathbf{a}\in\mathbb{N}^n}\frac{ \mathcal{A}(\mathbf{x},\tilde{B})^{\times}\cdot S(\mathbf{a})}{x_1^{a_1}\cdot x_2^{a_2}\cdots x_n^{a_n}}.
\end{align*}
\end{rem}

The following example shows that we cannot expect the conjecture to hold in the general case where $\tilde{B}$ is not acyclic.

\begin{ex} Put 
\begin{align*}
\tilde{B}=B=\begin{pmatrix}0&1&-1\\-1&0&1\\1&-1&0\end{pmatrix}.
\end{align*}
Then $\mathcal{A}(\mathbf{x},\tilde{B})$ is a cluster algebra of type $A_3$ with a cyclic seed $(\mathbf{x},\tilde{B})$. We have $I_1=(x_1,x_2+x_3)$, $I_2=(x_2,x_3+x_1)$, and $I_3=(x_3,x_1+x_2)$ and so $x_1+x_2+x_3\in I_1\cap I_2\cap I_3$, but $x_1+x_2+x_2\notin I_1I_2I_3$ as all elements in $I_1I_2I_3$ are linear combinations of monomials whose degree is at least 3. 
\end{ex}

\subsection{The criterion} In this subsection we give a sufficient condition for $\mathcal{A}(\mathbf{x},\tilde{B})$ to be a unique factorization domain. 
\label{Crit}

\begin{theorem}
\label{Criterion}
If $I^{\mathbf{a}}=I_1^{a_1}\cap I_2^{a_2}\cap\ldots\cap I_n^{a_n}$ holds for all $\mathbf{a}\in\mathbb{N}^n$, then $\mathcal{A}(\mathbf{x},\tilde{B})$ is a unique factorization domain. Moreover, the set irreducible elements in $\mathcal{A}(\mathbf{x},\tilde{B})$ is 
\begin{align*}
&\left(\left\{\lambda x_i\colon 1\leq i\leq n, \lambda\in \mathcal{A}(\mathbf{x},\tilde{B})^{\times} \right\} \right.\\
&\hspace{2cm}\left.\cup\left\{\frac{\lambda P}{M(P)}\colon P\in R \ {\rm irreducible},  \lambda\in \mathcal{A}(\mathbf{x},\tilde{B})^{\times} \right\} \right)\backslash \mathcal{A}(\mathbf{x},\tilde{B})^{\times}.
\end{align*}
\end{theorem}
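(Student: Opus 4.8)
The plan is to use the description of $\mathcal{A}(\mathbf{x},\tilde{B})$ from Remark~\ref{Union2} (valid under the hypothesis $I^{\mathbf{a}}=\bigcap_i I_i^{a_i}$) together with the multiplicativity of the invariants $m_i$. First I would establish that, under the conjectured intersection equality, every non-zero element $\alpha\in\mathcal{A}(\mathbf{x},\tilde{B})$ has a \emph{canonical form} $\alpha=\lambda P/M(P)$ with $\lambda\in\mathcal{A}(\mathbf{x},\tilde{B})^{\times}$, $P\in R$, and $x_i\nmid P$ whenever $m_i(P)\neq 0$; indeed, writing $\alpha=\lambda Q/x^{\mathbf{a}}$ with $Q\in I^{\mathbf{a}}=\bigcap I_i^{a_i}$ and cancelling common factors $x_i$ via Lemma~\ref{Cancelling} (the remark following Remark~\ref{Union2}) reduces $\mathbf{a}$ to $(m_1(P),\dots,m_n(P))=M(P)$ for the remaining numerator $P$, and since $Q\in I_i^{a_i}$ forces $a_i\leq m_i(Q)$ the exponent in the denominator is exactly $M(P)$. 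I would check this form is unique up to replacing $P$ by an associate times a unit in $R^\times=K^\times$, using that $x_1,\dots,x_m$ are algebraically independent so $R\hookrightarrow\mathcal{L}_{\mathbf{x}}$ and the only monomials invertible in $\mathcal{A}$ are the $x_j$ with $j$ frozen.

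Next I would prove multiplicativity: for non-zero $P,Q\in R$ one has $m_i(PQ)=m_i(P)+m_i(Q)$ for all $1\leq i\leq n$, hence $M(PQ)=M(P)M(Q)$. The inequality $m_i(PQ)\geq m_i(P)+m_i(Q)$ is immediate, and the reverse is exactly Lemma~\ref{Cancelling} applied with $a_i=m_i(P)+m_i(Q)+1$: if $PQ\in I_i^{a_i}$ then $P\in I_i^{b_i}$, $Q\in I_i^{a_i-b_i}$ with $b_i+( a_i-b_i)=a_i>m_i(P)+m_i(Q)$, contradicting maximality. Consequently the assignment $\alpha=\lambda P/M(P)\mapsto P$ is multiplicative up to units, i.e.\ the map $\mathcal{A}\setminus\{0\}\to R\setminus\{0\}$ sending $\alpha$ to its numerator in canonical form is a monoid homomorphism modulo $K^\times$; more precisely, if $\alpha=\lambda P/M(P)$ and $\beta=\mu Q/M(Q)$ then $\alpha\beta=\lambda\mu\,PQ/M(PQ)$ is already in canonical form provided $x_i\nmid PQ$ when $m_i(PQ)\neq 0$, which again follows from Lemma~\ref{Cancelling} since $x_i\mid PQ$ would give $x_i\mid P$ or $x_i\mid Q$.

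With these tools in hand the UFD property follows by pulling back factorizations from $R$, which is a UFD. Given $\alpha=\lambda P/M(P)$, factor $P=cP_1\cdots P_r$ in $R$ into irreducibles; then $M(P)=\prod_i x_i^{m_i(P)}=\prod_i x_i^{\sum_j m_i(P_j)}$, so $\alpha$ is, up to a unit, the product of the elements $x_i$ (with multiplicity $m_i(P)$, which land among the irreducibles of type $\lambda x_i$) times the elements $P_j/M(P_j)\in\mathcal{A}(\mathbf{x},\tilde{B})$; one checks each such factor actually lies in $\mathcal{A}$ using Remark~\ref{Union2}. Existence of a factorization into irreducibles is thus reduced to describing the irreducibles, and uniqueness follows because any two factorizations of $\alpha$ push forward (via the numerator homomorphism and the exponent vectors $m_i$) to two factorizations of the same element of $R$ modulo units, where uniqueness holds. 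To finish I would verify the displayed list of irreducibles: each $\lambda x_i$ with $1\leq i\leq n$ is irreducible by Theorem~\ref{GLS}(2); an element $\lambda P/M(P)$ with $P\in R$ irreducible and not a unit is irreducible because a nontrivial factorization in $\mathcal{A}$ would, by multiplicativity of numerators and exponents, produce one in $R$; and conversely an irreducible $\alpha$ has canonical form $\lambda P/M(P)$, and if some $m_i(P)>0$ then $x_i$ divides $\alpha$ in $\mathcal{A}$ forcing $\alpha$ associate to $x_i$ (so it is of the first type), while if $M(P)=1$ then $P$ must itself be irreducible in $R$ (a factorization of $P$ would give one of $\alpha$). \emph{The main obstacle} I anticipate is the bookkeeping showing that the canonical form behaves well under multiplication \emph{and} that the factors $P_j/M(P_j)$ genuinely belong to $\mathcal{A}(\mathbf{x},\tilde{B})$ — i.e.\ that $P_j\in I_1^{m_1(P_j)}\cap\cdots\cap I_n^{m_n(P_j)}=I^{m_1(P_j),\dots}$, which is precisely where the hypothesis of the theorem is used — together with handling the $K=\mathbb{Z}$ case where $R^\times=\{\pm1\}$ rather than a field, but none of this should require anything beyond Lemma~\ref{Cancelling}, Remark~\ref{Union2}, and the fact that $R$ is a UFD.
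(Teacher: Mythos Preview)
Your overall strategy---use Lemma~\ref{Cancelling} to prove multiplicativity of the invariants $m_i$ and then transfer unique factorization from the UFD $R$---matches the paper's approach, and the formula $m_i(PQ)=m_i(P)+m_i(Q)$ is a clean way to package the repeated applications of Lemma~\ref{Cancelling} that the paper carries out case by case.

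There is however a genuine gap in your canonical form. It is \emph{not} true that every non-zero $\alpha\in\mathcal{A}(\mathbf{x},\tilde{B})$ can be written as $\lambda P/M(P)$ with $\lambda$ a unit and $x_i\nmid P$ whenever $m_i(P)\neq 0$: the initial cluster variable $x_1$ is a counterexample. Indeed, the condition forces $x_i\nmid P$ for all mutable $i$ (since $x_i\mid P$ implies $m_i(P)\ge 1$), and for such $P$ one never has $x_1\mid(\lambda P/M(P))$ in $\mathcal{A}$, because $\lambda P/(x_1M(P))\in\mathcal{A}$ would require $P\in I_1^{m_1(P)+1}$, contradicting the definition of $m_1(P)$. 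Your cancellation procedure starting from $\alpha=\lambda Q/x^{\mathbf a}$ only yields $a_i\le m_i(P)$ for the resulting numerator $P$, not equality; the correct normal form is $\alpha=\lambda\,x^{\mathbf c}\cdot P/M(P)$ for some $\mathbf c\in\mathbb N^n$, equivalently $\alpha=\lambda P/x^{\mathbf a}$ with $P\in S(\mathbf a)$ and $\mathbf a\le(m_1(P),\dots,m_n(P))$, which is precisely how the paper sets things up.

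This error propagates to your converse argument for irreducibles: the claim ``if some $m_i(P)>0$ then $x_i$ divides $\alpha$ in $\mathcal{A}$'' is false for $\alpha=\lambda P/M(P)$ by the same computation, so your case analysis does not actually show that every irreducible lies in the displayed set. Once you carry the extra factor $x^{\mathbf c}$ throughout, the argument repairs itself and coincides with the paper's: the paper's explicit split into ``$P$ divisible by some mutable $x_i$'' versus ``$P$ not divisible by any mutable $x_i$'' is exactly the dichotomy $\mathbf c\neq 0$ versus $\mathbf c=0$ that your formulation suppresses.
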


\begin{proof} Let $\frac{\lambda P}{\mathbf{x}^{\mathbf{a}}}$ with $\mathbf{a}\in\mathbb{N}^n, P\in S(\mathbf{a})$ and $\lambda \in \mathcal{A}(\mathbf{x},\tilde{B})^{\times}$ be an element in the cluster algebra $\mathcal{A}(\mathbf{x},\tilde{B})$. Without loss of generality we may assume that $x_i\nmid P$ for $i\in\{n+1,n+2,\ldots,m\}$. Suppose that $P=FG$ is reducible for some non-constant polynomials $F,G \in R\backslash \mathcal{A}(\mathbf{x},\tilde{B})^{\times}$. We distinguish two cases. If $P=x_iQ$ is divisible by $x_i$ for some $1\leq i\leq n$, then $a_i=0$, $Q\in I^{\mathbf{a}}$ by Lemma \ref{Cancelling} and
\begin{align*}
\frac{\lambda P}{\mathbf{x}^{\mathbf{a}}}=\frac{\lambda Q}{\mathbf{x}^{\mathbf{a}}} \cdot x_i.\end{align*}
is the product of two cluster algebra elements. This element cannot be irreducible unless $\frac{\lambda Q}{\mathbf{x}^{\mathbf{a}}}\in \mathcal{A}(\mathbf{x},\tilde{B})^{\times}$. If $P$ is not divisible by any $x_i$, then by Lemma \ref{Cancelling} there exist sequences $\mathbf{b},\mathbf{c}\in\mathbb{N}^n$ of natural numbers  with $\mathbf{b}+\mathbf{c}=\mathbf{a}$ such that $F\in I^{\mathbf{b}}$ and $G\in I^{\mathbf{c}}$. Hence,  
\begin{align*}
\frac{\lambda P}{\mathbf{x}^{\mathbf{a}}}=\frac{\lambda F}{\mathbf{x}^{\mathbf{b}}} \cdot \frac{G}{\mathbf{x}^{\mathbf{c}}}
\end{align*}
is the product of two cluster algebra elements none of which is invertible in $\mathcal{A}(\mathbf{x},\tilde{B})$. In both cases $\frac{\lambda P}{\mathbf{x}^{\mathbf{a}}}$ cannot be irreducible in $\mathcal{A}(\mathbf{x},\tilde{B})$. Now assume that $P\in R$ is an irreducible polynomial. If $P\in S_{\mathbf{a'}}$ for some sequence $\mathbf{a'}\geq \mathbf{a}$, then 
\begin{align*}
\frac{\lambda P}{\mathbf{x}^{\mathbf{a}}}=\frac{\lambda P}{\mathbf{x}^{\mathbf{a}'}} \cdot \mathbf{x}^{\mathbf{a}'-\mathbf{a}}
\end{align*}
is the product of two element none of which is invertible in $\mathcal{A}(\mathbf{x},\tilde{B})$. This shows that every irreducible cluster algebra element, which is not associated with an initial cluster variable, has the form $\frac{\lambda P}{M(P)}$ for some irreducible polynomial $P\in R$ not divisible by any $x_i$ $(1\leq i\leq m)$ and some $\lambda\in \mathcal{A}(\mathbf{x},\tilde{B})^{\times}$. Note that if $P=x_i$ for some $1\leq i\leq n$, then $M(P)=x_i$, so that $\frac{\lambda P}{M(P)}$ is invertible.

The above argument proves that every irreducible cluster algebra element is indeed contained in the denoted set. On the other hand, we have to prove that every element in the set is indeed irreducible. The initial cluster variables are irreducible by Theorem \ref{GLS}. Suppose that $P\in S(\mathbf{a})$ is an irreducible polynomial which is not divisible by any $x_i$. Moreover, assume that $P\notin S(\mathbf{a'})$ for all sequences $\mathbf{a}'\geq\mathbf{a}$. Suppose that we have a factorization 
\begin{align*}
\frac{\lambda P}{\mathbf{x}^{\mathbf{a}}}=\frac{\mu F}{\mathbf{x}^{\mathbf{b}}} \cdot \frac{\nu G}{\mathbf{x}^{\mathbf{c}}}
\end{align*}
with $F\in S(\mathbf{b})$ and $Q\in S(\mathbf{c})$ for some sequences $\mathbf{b},\mathbf{c}\in \mathbf{N}^n$ and $\mu,\nu\in \mathcal{A}(\mathbf{x},\tilde{B})^{\times}$. First of all, note that neither $F$ nor $G$ is divisible by some $x_i$ with $1\leq i\leq n$. Furthermore, without loss of generality we may also assume that they are not divisible by any $x_i$ with $n+1\leq i\leq m$. We see that $P=\alpha FG$ for some scalar $\alpha\in K^{\times}$. By the irreducibility of $P$ we know that either $F$ or $G$ -- say $F$ --  is constant. But none of the ideals $I_i$ with $1\leq i\leq n$ contains $1$, so it follows that $\mathbf{b}=0$, so that $\frac{\mu F}{\mathbf{x}^{\mathbf{B}}}\in \mathcal{A}(\mathbf{x},\tilde{B})^{\times}$.

For the unique factorization, assume that 
\begin{align*}
\mathbf{x}^{\mathbf{a}} \cdot \frac{P_1}{\mathbf{x}^{\mathbf{a}_1}} \cdot \frac{P_2}{\mathbf{x}^{\mathbf{a}_2}} \cdots \frac{P_r}{\mathbf{x}^{\mathbf{a}}_r} =  \mathbf{x}^{\mathbf{b}} \cdot \frac{Q_1}{\mathbf{x}^{\mathbf{b}_1}} \cdot \frac{Q_2}{\mathbf{x}^{\mathbf{b}_2}} \cdots \frac{Q_s}{\mathbf{x}^{\mathbf{b}_s}} 
\end{align*} 
for two integers $r,s\geq 0$, some irreducible polynomials $P_i\in S(\mathbf{a}_i)$ and $Q_j\in S(\mathbf{b}_j)$ for $1\leq i\leq r$ and $1\leq j\leq s$, respectively, such that no $P_i$ or $Q_j$ is divisible by some $x_k$ with $1\leq k\leq m$ and such that $M(P_i)=\mathbf{x}^{\mathbf{a}_i}$ and $M(Q_j)=\mathbf{x}^{\mathbf{b}_j}$ for all $i,j$. 

Then $r=s$ and the $P_i$ are (up to multiplication with scalars) a permutation of the $Q_j$. Without loss of generality we have $P_i=Q_i$ for $1\leq i\leq r$. Then also $\mathbf{a}_i=\mathbf{b}_i$ for all $i$. It follows that $\mathbf{a}=\mathbf{b}$. Hence, every element in $\mathcal{A}(\mathbf{x},\tilde{B})$ admits a unique factorization in irreducible elements.
\end{proof}

\begin{cor} Thanks to Gei\ss-Leclerc-Schr\"oer's theorem \ref{GLS} cluster variables are irreducible. Thus, if we write a cluster variable as a Laurent polynomial $P(\mathbf{x})/M(\mathbf{x})$ in the cluster variables for a seed for which Conjecture \ref{IdealProperties} holds, then $P\in R$ is an irreducible polynomial.
\end{cor}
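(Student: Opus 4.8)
The plan is to read the statement off Theorem \ref{GLS} together with the explicit description of the irreducible elements of $\mathcal{A}(\mathbf{x},\tilde{B})$ contained in Theorem \ref{Criterion}. First I would note that, since $m\geq 2$, Theorem \ref{GLS}(2) makes every cluster variable $y$ of $\mathcal{A}(\mathbf{x},\tilde{B})$ an irreducible element of $\mathcal{A}(\mathbf{x},\tilde{B})$, and in particular $y$ is not a unit. Write $y=P/M$ in lowest terms, that is with $P\in R$, with $M$ a monomial in $x_1,\dots,x_m$, and with $\gcd(P,M)=1$; then $(P,M)$ is unique up to a scalar in $K^\times$ (or, if one permits frozen factors, up to a unit of $\mathcal{A}(\mathbf{x},\tilde{B})$), and the claim is that $P$ is irreducible in $R$.

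Next I would apply Theorem \ref{Criterion} to $y$: since Conjecture \ref{IdealProperties} is assumed to hold for the seed $(\mathbf{x},\tilde{B})$, the element $y$ lies in the set displayed there, so up to a unit of $\mathcal{A}(\mathbf{x},\tilde{B})$ it is either an initial cluster variable $x_i$ with $1\leq i\leq n$, or of the form $Q/M(Q)$ for an irreducible polynomial $Q\in R$. As in the proof of Theorem \ref{Criterion} one may take $Q$ coprime to every $x_k$ with $1\leq k\leq m$, and $Q$ is automatically non-constant, since otherwise $y$ would be a unit. In the coefficient-free case $m=n$ the units are just $K^\times$, so the two possibilities read $P=\mu x_i$ and $P=\mu Q$ with $\mu\in K^\times$, and in either case $P$ is irreducible in $R$.

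The step I expect to require the most care is the frozen-variable bookkeeping in the presence of coefficients, namely when the unit $\lambda\in\mathcal{A}(\mathbf{x},\tilde{B})^\times$ coming out of Theorem \ref{Criterion} is a scalar times a nontrivial Laurent monomial in $x_{n+1},\dots,x_m$. The point to verify is that a cluster variable is never equal to a nontrivial monomial in $x_1,\dots,x_m$ unless it is a scalar multiple of one of $x_1,\dots,x_n$; this forces the Laurent monomial hidden inside $\lambda$ to sit in the denominator $M$ rather than in the numerator $P$, so that again $P=\mu x_i$ or $P=\mu Q$ for a scalar $\mu$, hence irreducible in $R$. I would deduce this monomiality statement from the shape $(\mathrm{monomial}+\mathrm{monomial})/\mathrm{monomial}$ of every exchange relation, the algebraic independence of the entries of a cluster, and the observation, a consequence of Proposition \ref{OneIdeal}, that a monomial lies in $I_i$ only if it is divisible by $x_i$; equivalently, one may phrase the corollary as the assertion that the polynomial part of the numerator, well defined up to units of $\mathcal{A}(\mathbf{x},\tilde{B})$, is irreducible, and in this form it is nothing but the content of Theorem \ref{Criterion}.
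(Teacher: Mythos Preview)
Your proof is correct and follows exactly the route the paper intends: the corollary is stated without proof in the paper, as an immediate consequence of Theorem~\ref{GLS}(2) (cluster variables are irreducible) combined with the description of irreducible elements in Theorem~\ref{Criterion}. Your third paragraph on frozen-variable bookkeeping goes beyond what the paper spells out; the worry you raise is legitimate, and your closing reformulation---that the numerator is irreducible up to units of $\mathcal{A}(\mathbf{x},\tilde{B})$---is the cleanest way to read the (somewhat informal) statement of the corollary.
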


\section{Applications of the criterion and the Dynkin cases}
\label{Applications}

\subsection{Coefficient-free cluster algebras of rank 2} 
\label{RankTwo}

In this section we study classes of cluster algebras where Conjecture \ref{IdealProperties} is true. We only treat coefficient-free cluster algebras. We are already well-prepared to treat cluster algebras of rank 2. Recall that the coprimality of two ideals $I,J\subseteq R$ implies $I\cap J=IJ$.

Let $\mathcal{A}$ be a cluster algebra of rank 2 without frozen variables. In this case,
\begin{align*}
\tilde{B}=B=\begin{pmatrix}0&b\\-c&0\end{pmatrix}
\end{align*}
for some positive integers $b,c$. We also write $\mathcal{A}(b,c)$ instead of $\mathcal{A}(\mathbf{x},\tilde{B})$. Every such cluster algebra is acyclic. Moreover, we have $f_1=1+x_2^c$ and $f_2=1+x_1^b$ which are different as $(b,c)\neq(0,0)$.

\begin{prop} 
\label{Rank2}
If $f_1$ and $f_2$ are both irreducible polynomials in $K[x_1,x_2]$, then $\mathcal{A}(b,c)$ is a unique factorization domain. 
\end{prop}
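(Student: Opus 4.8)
The plan is to invoke Theorem~\ref{Criterion}: it suffices to verify that Conjecture~\ref{IdealProperties} holds for $\mathcal{A}(b,c)$, i.e.\ that $I_1^{a_1}I_2^{a_2} = I_1^{a_1}\cap I_2^{a_2}$ for all $(a_1,a_2)\in\mathbb{N}^2$. Here $R=K[x_1,x_2]$, $I_1=(x_1,1+x_2^c)$, and $I_2=(x_2,1+x_1^b)$. Since only two ideals are involved, the product order contributes nothing subtle, and the whole statement collapses to a two-ideal intersection identity. I would first dispose of the degenerate cases $a_1=0$ or $a_2=0$ trivially (then both sides equal $I_2^{a_2}$ or $I_1^{a_1}$ respectively), so assume $a_1,a_2\geq 1$.

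The key step is to show $I_1$ and $I_2$ are coprime, i.e.\ $I_1+I_2=R$. Indeed $f_1=1+x_2^c\in I_1$ and $x_2\in I_2$, hence $1 = f_1 - x_2\cdot x_2^{c-1} \in I_1+I_2$ (using that $c\geq 1$, so $x_2^c = x_2\cdot x_2^{c-1}$ with $x_2^{c-1}\in R$). Alternatively this is exactly Proposition~\ref{coprime}, since in $\mathcal{A}(b,c)$ the index $1$ is a source (or a sink) and $1,2$ are adjacent; I would cite that proposition directly. Once $I_1+I_2=R$, the standard commutative-algebra facts recalled in Subsection~\ref{Ideals} give $I_1^{a_1}+I_2^{a_2}=R$ for all $a_1,a_2\geq 1$, and then $I_1^{a_1}\cap I_2^{a_2}=I_1^{a_1}I_2^{a_2}=I^{\mathbf{a}}$. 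This establishes Conjecture~\ref{IdealProperties} for every $\mathbf{a}\in\mathbb{N}^2$.

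With the conjecture verified, Theorem~\ref{Criterion} applies verbatim: we need the hypotheses (1)--(4) of Subsection~\ref{Assumptions}. Connectedness of $\tilde{B}$ holds since $b,c$ are positive; acyclicity holds since $\Sigma(\tilde{B})$ has two vertices and one arrow, hence no oriented cycle (every rank-2 seed is acyclic); coprimality of $f_1,f_2$ is the coprimality of $I_1,I_2$ just shown, or can be seen directly; and irreducibility of $f_1=1+x_2^c$ and $f_2=1+x_1^b$ is precisely the hypothesis of the proposition. Hence $\mathcal{A}(b,c)$ is a unique factorization domain.

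There is essentially no hard obstacle here — the argument is short because rank~$2$ forces coprimality of the two ideals for free. The only point requiring a sentence of care is the passage from $I_1+I_2=R$ to $I_1^{a_1}+I_2^{a_2}=R$, but that is a quoted lemma; and one should double-check that the irreducibility hypothesis on $f_1,f_2$ is genuinely needed (it is, since Theorem~\ref{Criterion} presupposes assumption~(4), and indeed without it Proposition~\ref{ReducibleImpliesNonFactorial} would already obstruct unique factorization). So the proof is little more than a bookkeeping check that all four standing assumptions hold together with an application of Proposition~\ref{coprime} and Theorem~\ref{Criterion}.
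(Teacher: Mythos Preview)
Your proposal is correct and follows essentially the same route as the paper: reduce to Theorem~\ref{Criterion}, then obtain $I_1^{a_1}\cap I_2^{a_2}=I_1^{a_1}I_2^{a_2}$ from the coprimality of $I_1$ and $I_2$ via Proposition~\ref{coprime}. One small imprecision: coprimality of the ideals $I_1,I_2$ is not the same thing as coprimality of the polynomials $f_1,f_2$ (assumption~(3)); the latter holds simply because $f_1$ and $f_2$ are distinct irreducibles, as noted just before the proposition.
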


\begin{proof}
Suppose that $f_1$ and $f_2$ are both irreducible in $K[x_1,x_2]$. By Theorem \ref{Criterion} it is enough to show that $I_1^{a_1}I_2^{a_2}=I_1^{a_1}\cap I_2^{a_2}$ for all $a_1,a_2\in\mathbb{N}$. The identity is exactly the coprimality of $I_1$ and $I_2$ which holds true by Proposition \ref{coprime}. 
\end{proof}

\subsection{Two lemmata}
\label{Lemmata}

In this subsection we state two lemmata which are crucial for various inductive proofs in the next subsection. The first lemma concerns sources and sinks. 
 
\begin{lemma}
\label{SinksNSources}
Let $\mathbf{a}\in\mathbb{N}^n$. Suppose that $I^{\mathbf{b}}=I_1^{b_1}\cap I_2^{b_2}\cap\ldots\cap I_n^{b_n}$ holds for all $\mathbf{b}\in\mathbb{N}^n$ such that $\sum_{i=1}^nb_i<\sum_{i=1}^na_i$. Suppose that the index $i\in\{1,2,\dots,n\}$ is either a source or a sink such that $a_i\neq 0$. Furthermore, assume that there exists an index $1\leq j\leq n$ with $a_j\neq 0$ such that $i$ and $j$ are adjacent. Then we also have $I^{\mathbf{a}}=I_1^{a_1}\cap I_2^{a_2}\cap\ldots\cap I_n^{a_n}$.
\end{lemma}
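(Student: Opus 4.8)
The plan is to induct on $\sum_i a_i$ and reduce the claim for $\mathbf{a}$ to a claim for a strictly smaller multi-index, using the special geometry of a source/sink $i$. The inclusion $I^{\mathbf{a}}\subseteq I_1^{a_1}\cap\cdots\cap I_n^{a_n}$ is automatic, so the work is the reverse inclusion. Let $P\in I_1^{a_1}\cap\cdots\cap I_n^{a_n}$. First I would exploit that $i$ is a source or sink: then $f_i=1+M_i$ for a monomial $M_i$, and for the adjacent index $j$ we have $x_j\mid M_i$. The key structural fact (Remark \ref{ViceVersa}) is that since $f_i=x_i^0\cdot 1 + M_i$ has the shape ``$1$ plus a monomial'', we can expand any polynomial in powers of $f_i$: writing $P=\sum_{r\ge 0}A_r f_i^{\,r}$ with $A_r\in R_i$, membership $P\in I_i^{a_i}$ is equivalent to $x_i^{a_i-r}\mid A_r$ for $0\le r\le a_i$.

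Next I would analyze how the conditions $P\in I_k^{a_k}$ for $k\ne i$ interact with this $f_i$-expansion. Because $f_i=1+M_i$ with $M_i$ a monomial, reducing modulo $x_i$ (or more precisely, tracking the $f_i$-adic filtration) lets me peel off one factor: the idea is to show that $P\in I_i^{a_i}\cap I_j^{a_j}$ forces $P\in I_i^{a_i-1}\cap I_j^{a_j-1}\cap(\text{something})$ after dividing by a suitable element, or alternatively that $I_i^{a_i}\cap I_j^{a_j} = I_i\cdot(I_i^{a_i-1}\cap I_j^{a_j-1}) + \ldots$, so that the problem for $\mathbf{a}$ collapses to the problem for $\mathbf{a}$ with both $a_i$ and $a_j$ decreased by one (total degree drop of $2$), which is covered by the induction hypothesis. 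Concretely, I expect to use that $M_i\in I_j$ (since $x_j\mid M_i$) together with $1=f_i-M_i\in I_i+I_j$, i.e. $I_i$ and $I_j$ are coprime by Proposition \ref{coprime}; coprimality of $I_i$ and $I_j$ gives $I_i^{a_i}\cap I_j^{a_j}=I_i^{a_i}I_j^{a_j}$ for these two ideals, which is the lever that lets me split off the factor from the index $i$ and the factor from the index $j$ simultaneously and hand the remaining product ideal $I^{\mathbf{a}'}$ (with $a_i'=a_i-1$, $a_j'=a_j-1$, others unchanged) to the inductive hypothesis. Then one reassembles $P\in I^{\mathbf{a}'}\cdot I_iI_j\subseteq I^{\mathbf{a}}$.

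The point where care is needed is that coprimality of $I_i$ and $I_j$ handles only those two factors, not the interaction with the remaining $I_k^{a_k}$ for $k\ne i,j$; so I would argue as follows. Write $P\in I_i^{a_i}\cap J$ where $J=\bigcap_{k\ne i}I_k^{a_k}$. Using the $f_i$-expansion $P=\sum_r A_rf_i^{\,r}$ with $x_i^{a_i-r}\mid A_r$, and the fact that $f_i\equiv 1\pmod{I_j}$ hence $f_i$ is a unit modulo each $I_k$ that is coprime to $I_i$ — in particular modulo $I_j$ — I want to deduce that each coefficient $A_r$ already lies in the appropriate intersection. The cleanest route is: $I_i$ coprime to $I_j$ means (by the standard lemma quoted before Proposition \ref{coprime}) $I_i^{a_i}$ coprime to $I_j^{a_j}$, and also $f_i$ is invertible in $R/I_j^{a_j}$; combining $P\in I_i^{a_i}$ with $P\equiv 0$ modulo the smaller-degree intersections provided by the induction hypothesis, I factor $P$ as $x_i\cdot P' \pmod{\text{higher powers}}$ plus an $M_i$-correction, and check $P'$ satisfies the hypotheses for $\mathbf{a}-e_i-e_j+(\text{adjustments})$.

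\textbf{Main obstacle.} The genuine difficulty is making the descent precise: I must produce, from $P$, an element $P'$ with $P=(\text{product of }I_i,I_j\text{ generators})\cdot P' + (\text{manifestly in }I^{\mathbf{a}})$, and verify $P'\in I^{\mathbf{b}}$ for a $\mathbf{b}$ with $\sum b_k<\sum a_k$ so the induction hypothesis applies; the bookkeeping of which power of $f_i$ absorbs which power of $x_i$, and ensuring the ``correction'' terms coming from $M_i$ (which is in $I_j$ but whose extra variables may or may not lie in other $I_k$) land back in $I^{\mathbf{a}}$, is where all the real content sits. I expect the source/sink hypothesis (so that $f_i$ is literally $1+\text{monomial}$, not a binomial of two nontrivial monomials) is exactly what is needed to control these correction terms, and the adjacency of $j$ with $a_j\ne 0$ is what guarantees we actually have a factor $x_j$ available to pair with the dropped $x_i$ so that the total degree strictly decreases.
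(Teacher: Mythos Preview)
You have correctly identified the crucial ingredient---Proposition \ref{coprime} gives $I_i+I_j=R$, hence $I_i^{a_i}+I_j^{a_j}=R$---but you are missing the one-line way to exploit it, and as a result your descent never gets off the ground. All of the $f_i$-adic bookkeeping (Remark \ref{ViceVersa}, expansions $P=\sum_r A_r f_i^{\,r}$, correction terms from $M_i$) is unnecessary for this lemma; that machinery belongs to Lemma \ref{Free}, not here.

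The paper's argument is a pure partition-of-unity trick. From $I_i^{a_i}+I_j^{a_j}=R$ choose $F\in I_i^{a_i}$ and $G\in I_j^{a_j}$ with $F+G=1$. Now apply the inductive hypothesis not with $a_i,a_j$ each decreased by $1$, but with the $i$-th exponent set to \emph{zero}: the sequence $\mathbf{b}$ with $b_i=0$ and $b_k=a_k$ for $k\neq i$ has strictly smaller total, so
\[
P\in \bigcap_{k\neq i} I_k^{a_k}=\prod_{k\neq i} I_k^{a_k},
\]
and multiplying by $F\in I_i^{a_i}$ gives $PF\in I^{\mathbf{a}}$. Symmetrically, dropping the $j$-th factor gives $PG\in I^{\mathbf{a}}$. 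Then $P=PF+PG\in I^{\mathbf{a}}$. That is the entire proof.

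Your ``main obstacle''---controlling how the remaining $I_k^{a_k}$ for $k\ne i,j$ interact with the descent---simply evaporates once you drop one whole exponent at a time instead of shaving off $1$ from two exponents simultaneously: the inductive hypothesis already packages all the other factors into a product, and multiplication by $F$ (respectively $G$) supplies the missing one.
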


\begin{proof} Let $P$ be a polynomial in the intersection $I_1^{a_1}\cap I_2^{a_2}\cap\ldots I_n^{a_n}$. We have to show that $P\in I^{\mathbf{a}}$. By Proposition \ref{coprime} the ideals $I_i$ and $I_j$ are coprime and hence there exist polynomials $F\in I_i^{a_i}$ and $I_j^{a_j}$ such that $F+G=1$. By assumption we know that
\begin{align*}
P\in I_1^{a_1}\cap\ldots \cap \widehat{I_i^{a_i}}\cap\ldots \cap I_n^{a_n}=I_1^{a_1}\cdots \widehat{I_i^{a_i}}\cdots I_n^{a_n},\\
P\in I_1^{a_1}\cap\ldots \cap \widehat{I_j^{a_j}}\cap\ldots \cap I_n^{a_n}=I_1^{a_1}\cdots \widehat{I_j^{a_j}}\cdots I_n^{a_n}.
\end{align*}
We conclude that $PF,PG\in I^{\mathbf{a}}$ and hence $P=P(F+G)\in I^{\mathbf{a}}$.
\end{proof}

For the second lemma we introduce the notation $N(i)=\{j\colon 1\leq j\leq n, b_{ij}\neq 0\}$ for the set of \textit{mutable neigbors} of a mutable or frozen index $i\in\{1,2,\ldots,m\}$.   

\begin{lemma} 
\label{Free}
Let $\mathbf{a}\in\mathbb{N}^n$. Suppose that $i$ is a mutable index with $a_i\neq 0$ and that $I^{\mathbf{b}}=I_1^{b_1}\cap I_2^{b_2}\cap\ldots \cap I_n^{b_n}$ holds for all $\mathbf{b}\in\mathbb{N}^n$ with $\sum_{j=1}^nb_j<\sum_{j=1}^na_j$. Assume that one of the following conditions holds.
\begin{itemize}
\item[(a)] For all indices $j\in N(i)$ we have $a_j=0$.
\item[(b)] The initial exchange polynomial $f_i$ has the form $f_i=x_k+M_i$ for some (mutable or frozen) index $k$ and some monomial $M_i\in R$. Furthermore, for all neighbors $j\in N(k)\backslash \{i\}$ we have $a_j=0$.
\end{itemize}
Then we also have $I^{\mathbf{a}}=I_1^{a_1}\cap I_2^{a_2}\cap \ldots \cap I_n^{a_n}$.
\end{lemma}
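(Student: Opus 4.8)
The plan is to prove Lemma~\ref{Free} by reducing both cases to an application of Lemma~\ref{Cancelling} and the inductive hypothesis, mimicking the structure of the proof of Lemma~\ref{SinksNSources}. Let $P$ be a polynomial in the intersection $I_1^{a_1}\cap I_2^{a_2}\cap\ldots\cap I_n^{a_n}$; we must show $P\in I^{\mathbf{a}}$. By the inductive hypothesis applied to the sequence $\mathbf{a}'$ obtained from $\mathbf{a}$ by replacing $a_i$ with $a_i-1$ (whose entry sum is strictly smaller), we already know $P\in I_1^{a_1}\cap\ldots\cap I_i^{a_i-1}\cap\ldots\cap I_n^{a_n}=I_1^{a_1}\cdots I_i^{a_i-1}\cdots I_n^{a_n}$. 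So we can write $P$ as a sum of products, each of which carries a factor from $I_i^{a_i-1}$ times factors from the other $I_j^{a_j}$; the task is to upgrade the exponent of $I_i$ from $a_i-1$ to $a_i$. Equivalently, after pulling out the known divisibility, it suffices to analyze a single term and show the remaining cofactor (the product of the $I_j^{a_j}$-parts for $j\ne i$) actually lies in $I_i$ modulo what we can absorb, i.e. to show that $P\in I^{\mathbf{a}}$ once we know $P\in I_i^{a_i}$ and $P\in I_1^{a_1}\cdots\widehat{I_i^{a_i}}\cdots I_n^{a_n}$.

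The crux is therefore: given $P\in I_i^{a_i}$ and $P\in J:=I_1^{a_1}\cdots\widehat{I_i^{a_i}}\cdots I_n^{a_n}$, conclude $P\in I^{\mathbf{a}}=I_i^{a_i}\cdot J$. In case (a), no $I_j$ with $j\in N(i)$ enters $J$ (since $a_j=0$ there), and every generator of $J$ is a polynomial in the variables and in the $f_j$ for $j\notin N(i)\cup\{i\}$; such generators involve neither $x_i$ nor any variable occurring in the monomials of $f_i$ with positive exponent (that's exactly what $j\notin N(i)$ buys). Hence a generator $H$ of $J$ lies in $R_i$ and, more strongly, $H$ is coprime to $f_i$ in $R_i$, or at least $x_i\nmid H$ and $f_i\nmid H$, so $m_i(H)=0$ by Proposition~\ref{OneIdeal}. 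Writing $P=\sum H_\ell P_\ell$ with $H_\ell$ generators of $J$ and $P_\ell\in R$, and expanding each $P_\ell$ in powers of $x_i$, one uses Proposition~\ref{OneIdeal} together with the irreducibility of $f_i$ (as in the proof of Lemma~\ref{Cancelling}) to push the full $I_i^{a_i}$-divisibility of $P$ onto the $P_\ell$, which gives $P_\ell\in I_i^{a_i}$ after regrouping, hence $P\in I_i^{a_i}\cdot J=I^{\mathbf{a}}$. In case (b), use Remark~\ref{ViceVersa}: since $f_i=x_k+M_i$, we may expand polynomials in powers of $f_i$ with coefficients in $K[x_1,\ldots,\widehat{x_k},\ldots,x_m]$, and $I_i^{a_i}$-membership of $P$ becomes the condition $x_i^{a_i-r}\mid[f_i^r]P$. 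The generators of $J$ now involve no variable $x_j$ for $j\in N(k)\setminus\{i\}$ and in particular are not divisible by $x_i$ once we expand in $f_i$; running the same argument with the roles of $x_i$ and $f_i$ swapped yields $P\in I^{\mathbf{a}}$.

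I expect the main obstacle to be making the "coprimality of a generator of $J$ with $f_i$" step airtight: one must verify that in case (a) every generator $H$ of $J$ genuinely has $m_i(H)=0$, which requires checking that the only way $x_i$ or $f_i$ could divide a product of the $f_j$'s ($j\ne i$, $a_j\ne0$, so $j\notin N(i)$) and variables is excluded by the hypothesis $N(i)\cap\{j:a_j\ne0\}=\emptyset$ together with the coprimality of the $f_j$'s from Section~\ref{Assumptions}; and symmetrically in case (b). Once that is in place, the remainder is a bookkeeping variant of the computation already carried out in Lemma~\ref{Cancelling}: expand everything, reduce modulo $I_i^{k+l+1}$-type congruences, and invoke the irreducibility of $f_i$ to force the divisibility onto the correct factor. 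I would present case (a) in full and then indicate that case (b) follows verbatim after the substitution $R=K[x_1,\ldots,\widehat{x_k},\ldots,x_m,f_i]$ supplied by Remark~\ref{ViceVersa}.
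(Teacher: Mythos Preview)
Your overall architecture matches the paper's: reduce via the inductive hypothesis to $P\in J:=I_1^{a_1}\cdots\widehat{I_i^{a_i}}\cdots I_n^{a_n}$, observe that in case~(a) the standard generators $x^{\mathbf b}f^{\mathbf a'-\mathbf b}$ of $J$ lie in $R_i$, and then expand in powers of $x_i$ (respectively of $f_i$ in case~(b) via Remark~\ref{ViceVersa}). However, the step you describe as ``push the full $I_i^{a_i}$-divisibility of $P$ onto the $P_\ell$, which gives $P_\ell\in I_i^{a_i}$'' is not correct and is not what the argument actually yields. From $P=\sum_\ell H_\ell P_\ell\in I_i^{a_i}$ with $H_\ell\in R_i$ one cannot conclude that the individual $P_\ell$ lie in $I_i^{a_i}$; take for instance $H_1=H_2$, $P_1=x_i$, $P_2=f_i^{a_i}-x_i$. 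What the paper does instead is regroup by powers of~$x_i$: since each $H_\ell\in R_i$, the coefficient $[x_i^r]P=\sum_\ell H_\ell P_{\ell,r}$ lies in $J$ and, by Proposition~\ref{OneIdeal}, is divisible by $f_i^{a_i-r}$. Writing $[x_i^r]P=f_i^{a_i-r}Q_r$, one then has to show that the quotient $Q_r$ still lies in each $I_j^{a_j}$.

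This last point is where you have misidentified the obstacle. The relevant fact is not that $m_i(H_\ell)=0$ for the generators $H_\ell$ of $J$, but rather that $f_i\notin I_j$ for every $j\neq i$ (Proposition~\ref{InitialPolyNotInOtherIdeal}); combined with Lemma~\ref{Cancelling} this lets you strip the factor $f_i^{a_i-r}$ from the containment $f_i^{a_i-r}Q_r\in I_j^{a_j}$ and conclude $Q_r\in I_j^{a_j}$. A second appeal to the inductive hypothesis then gives $Q_r\in\bigcap_{j\neq i}I_j^{a_j}=I^{\mathbf a'}$, and summing $x_i^r f_i^{a_i-r}Q_r\in I_i^{a_i}\cdot I^{\mathbf a'}=I^{\mathbf a}$ over $r$ finishes case~(a). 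Case~(b) is the same computation after the change of variables supplied by Remark~\ref{ViceVersa}, with the roles of $x_i$ and $f_i$ interchanged and using $x_i\notin I_j$ in place of $f_i\notin I_j$. So your plan is right, but you should replace the ``$P_\ell\in I_i^{a_i}$'' claim by the $Q_r$-argument and invoke Proposition~\ref{InitialPolyNotInOtherIdeal} rather than the condition $m_i(H)=0$.
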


\begin{proof} Let $P$ be a polynomial in the intersection $I_1^{a_1}\cap I_2^{a_2}\cap \ldots \cap I_n^{a_n}$. We have to show that $P\in I^{\mathbf{a}}$. We consider the sequence $\mathbf{a}'\in\mathbb{N}^n$ be the element with $a'_i=0$ and $a'_j=a_j$ for $j\neq i$. The assumption $a_i\neq 0$ implies that  
\begin{align*}
P\in I_1^{a_1}\cap\ldots \cap \widehat{I_i^{a_i}}\cap\ldots \cap I_n^{a_n}=I^{\mathbf{a}'}.
\end{align*}

Assume that $(a)$ holds. We can write 
\begin{align*}
P=\sum_{0\leq\mathbf{b}\leq\mathbf{a}'}P_{\mathbf{b}}x^{\mathbf{b}}f^{\mathbf{a}'-\mathbf{b}}
\end{align*} 
for some polynomials $P_{\mathbf{b}}\in R$. The assumption $(a)$ implies that the polynomials $x^{\mathbf{b}}f^{\mathbf{a}'-\mathbf{b}}$ do not depend on $x_i$. Let us order the remaining terms by powers of $x_i$, i.e., let us write $P_{\mathbf{b}}=\sum_{r=0}^{\infty} P_{\mathbf{b},r}x_i^r$ for some (uniquely determined) polynomials $P_{\mathbf{b},r}\in K[x_1,\ldots,\widehat{x_i},\ldots,x_m]$. It follows from Proposition \ref{OneIdeal} that $f_i^{a_i-r}\vert \sum_{0\leq\mathbf{b}\leq\mathbf{a}'} P_{\mathbf{b},r}x^{\mathbf{b}}f^{\mathbf{a}'-\mathbf{b}}$ for all $0\leq r\leq a_i$. Let $Q_r$ be the quotient of those two polynomials. Then $f_i^{a_i-r}Q_r\in I_j^{a_j}$ for all mutable indices $j\neq i$. But Proposition \ref{InitialPolyNotInOtherIdeal} asserts that $f_i\notin I_j$ for all $i\neq j$, which together with Lemma~\ref{Cancelling} yields $Q_r\in I_j^{a_j}$ for all $j\neq i$. We conclude that $P=\sum_{r=0}^{\infty}x_i^rf_i^{a_i-r}Q_r\in I^{\mathbf{a}}$.

Assume that $(b)$ holds. The argument is basically the same. This time we write 
\begin{align*}
P=\sum_{0\leq\mathbf{b}\leq\mathbf{a}'\atop r\geq 0}P_{\mathbf{b},r}f_i^rx^{\mathbf{b}}f^{\mathbf{a}'-\mathbf{b}}
\end{align*} 
with $P_{\mathbf{b},r}\in K[x_1,\ldots,\widehat{x_k},\ldots,x_m]$. The assumption $(b)$ implies that the polynomials $x^{\mathbf{b}}f^{\mathbf{a}'-\mathbf{b}}$ in the sum do not depend on $x_k$. Moreover, Remark \ref{ViceVersa} implies $x_i^{a_i-r}\vert \sum_{0\leq\mathbf{b}\leq\mathbf{a}'}P_{\mathbf{b},r}x^{\mathbf{b}}f^{\mathbf{a}'-\mathbf{b}}$ for $0\leq r\leq a_i$. A similar argument as above together with the observation $x_i\notin I_j$ for $i\neq j$ finishes the proof.\end{proof}

\subsection{The Dynkin cases}
\label{DynkinUFD}
As promised, let us now consider coefficient-free cluster algebras of finite type. As in Section \ref{BN4} let $\Delta$ be a Dynkin diagram of type $A, D$ or $E$. Example \ref{Dynkin} shows that $\mathcal{A}(\mathbf{x},\Delta)$ is not a unique factorization domain for $\Delta$ of type $A_3$ or $D_n(n\geq 4)$. 

It turns out that in all other cases, namely $A_n (n\neq 1,3)$ and $E_n(n=6,7,8)$, there exists an orientation $Q$ of $\Delta$ such that the initial exchange polynomials with respect to the initial seed $(\mathbf{x},B(Q))$ are irreducible and pairwise different and such that Conjecture \ref{IdealProperties} holds. Thus, the cluster algebras $\mathcal{A}(\mathbf{x},\Delta)$ are unique factorization domains in these cases. The results are quite surprising, because other ring theoretic properties behave differently, see for example Muller's results on regularity \cite[Section 7]{M} which in type $A_n$ depend on whether $n\equiv 3$ $(\mathrm{mod} \ 4)$.

\begin{theorem} Let $n\neq 1,3$. Moreover, let $Q$ be the linear orientation of the Dynkin diagram $A_n$, i.e., $Q_0=\{1,2,\ldots,n\}$ and $Q_1=\{i\rightarrow i+1 \colon 1\leq i\leq n-1\}$. Then $I^{\mathbf{a}}=I_1^{a_1}\cap I_2^{a_2}\cap\ldots\cap I_n^{a_n}$ holds for all sequences $\mathbf{a}\in\mathbb{N}^n$.
\end{theorem}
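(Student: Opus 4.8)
The plan is to induct on $\sum_{i=1}^n a_i$, the base case $\sum a_i \leq 1$ being trivial (for $\mathbf{a}=0$ both sides are $R$, and if a single $a_i=1$ the two sides literally coincide). So assume $\sum a_i \geq 2$ and that the conjectured equality holds for every $\mathbf{b}$ with smaller coordinate sum. I would then try to reduce the claim for $\mathbf{a}$ to a smaller instance by peeling off one ideal $I_i^{a_i}$ with $a_i \neq 0$, using the two lemmata from Subsection~\ref{Lemmata}. The linear quiver $1 \to 2 \to \cdots \to n$ has vertex $1$ as a source and vertex $n$ as a sink, and the exchange polynomials are $f_1 = 1 + x_2$, $f_n = 1 + x_{n-1}$, and $f_i = x_{i-1} + x_{i+1}$ for $1 < i < n$.

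The core case analysis I would run, given the support $\operatorname{supp}(\mathbf{a}) = \{i : a_i \neq 0\}$: First, if $\operatorname{supp}(\mathbf{a})$ contains an index $i$ adjacent to another index $j \in \operatorname{supp}(\mathbf{a})$ with $i$ a source or sink (i.e. $i \in \{1,n\}$), apply Lemma~\ref{SinksNSources} directly. Second, if $\operatorname{supp}(\mathbf{a})$ contains an index $i$ all of whose mutable neighbors $N(i)$ miss the support, apply Lemma~\ref{Free}(a). Third, for an interior index $i$ with $f_i = x_{i-1} + x_{i+1}$, if one of $i-1, i+1$, say $k$, has $N(k)\setminus\{i\}$ disjoint from the support, apply Lemma~\ref{Free}(b). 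The combinatorial heart of the argument is to check that, for the linear $A_n$ quiver with $n \neq 1,3$, \emph{every} nonzero $\mathbf{a}$ falls into at least one of these cases — equivalently, that no "bad" support configuration exists. A support is bad only if: it contains no source/sink adjacent to the support (so if $1$ or $n$ is in the support, its unique neighbor $2$ resp.\ $n-1$ is not), every interior support element has a support element among its two neighbors (blocking (a)), and for every interior support element $i$, both $i-1$ and $i+1$ have a support neighbor other than $i$ (blocking (b)). I would argue that these constraints are jointly unsatisfiable for $n \neq 1,3$: they force the support to be, essentially, an "isolated middle pair" pattern, and a short case check on small $n$ plus a propagation argument for larger $n$ shows the only genuinely obstructed configuration lives in $A_3$ (support $\{1,3\}$ with $f_1 = f_3 = 1+x_2$, exactly the non-UFD case of Example~\ref{Dynkin}), while $A_1$ is excluded for the trivial reason that it has no pair of ideals to intersect.

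The main obstacle I anticipate is precisely this combinatorial exhaustion: making the "no bad support" claim airtight without an unwieldy case split. I would organize it by looking at a \emph{minimal} counterexample support $S$ and tracking what the three blocking conditions force about the endpoints and interior runs of $S$ viewed as a subset of the path $1$–$2$–$\cdots$–$n$; conditions (blocking (b)) in particular push support elements onto \emph{both} sides of every interior $i \in S$, which tends to cascade toward the ends of the path where the source/sink condition then kicks in. A secondary, more routine obstacle is bookkeeping in the applications of Lemma~\ref{Free}: its hypotheses require the induction hypothesis for all $\mathbf{b}$ of strictly smaller coordinate sum, which we have, and one must be careful that the sequence $\mathbf{a}'$ obtained by zeroing $a_i$ indeed has smaller sum and that the relevant "hatted" intersections equal the corresponding products — but this is exactly what the induction hypothesis supplies. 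Once the combinatorial claim is settled, the proof assembles into: reduce to one of Lemma~\ref{SinksNSources}, Lemma~\ref{Free}(a), or Lemma~\ref{Free}(b), invoke it, and conclude.
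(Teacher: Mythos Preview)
Your proposal is correct and uses the same inductive framework and the same two lemmata as the paper, but your execution is more elaborate than necessary. The paper avoids the ``no bad support'' combinatorial exhaustion entirely: instead of searching $\operatorname{supp}(\mathbf{a})$ for an index to which some lemma applies, it simply takes $i$ to be the \emph{smallest} index with $a_i\neq 0$ and splits into three direct cases --- $a_1,a_2>0$ (Lemma~\ref{SinksNSources} at the source $i=1$), $a_1>0,a_2=0$ (Lemma~\ref{Free}(a) at $i=1$ since $N(1)=\{2\}$), and $a_1=0$ (Lemma~\ref{Free}(b) at the minimal $i\geq 2$ with $k=i-1$, where $N(k)\setminus\{i\}$ is either empty or $\{i-2\}$ with $a_{i-2}=0$ by minimality). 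So the ``main obstacle'' you anticipate --- making the support exhaustion airtight --- simply does not arise; the linear ordering on the path already hands you the right index. Your propagation argument would work too, but it is doing more than needed.
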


\begin{proof} The case $n=2$ is treated in Proposition \ref{Rank2}, so let us assume that $n\geq 4$. Let $P$ be a polynomial in the intersection $I_1^{a_1}\cap I_2^{a_2}\cap\ldots I_n^{a_n}$. We have to show that $P\in I^{\mathbf{a}}$ which we prove by mathematical induction on $\sum_{i=1}^na_i$. The base case is trivial. Assume that the statements holds for all sequences with a smaller sum. If $a_1$ and $a_2$ are both greater than $0$, then the claim follows from Lemma \ref{SinksNSources}, because $i=1$ is a source which is adjacent to $j=2$. If $a_1$ is greater than $0$, but $a_2=0$, then the claim follows from Lemma \ref{Free} (a), because $N(1)=\{2\}$. If $a_1=0$, then we apply Lemma \ref{Free} (b) with $i$ the smallest integer such that $a_i\neq 0$ (which is necessarily greater than $1$) and $k=i-1$. It is easy to see that assumption (b) is satisfied because either $i=2$ or $a_{i-2}=0$.     
\end{proof}

\begin{theorem} Let $n\in\{6,7,8\}$. Moreover, let $Q$ be the orientation of $E_n$ with arrows $1\rightarrow 2\rightarrow 3$, $n\rightarrow \ldots\rightarrow 6\rightarrow 5\rightarrow 3$ and $3\rightarrow 4$. Then $I^{\mathbf{a}}=I_1^{a_1}\cap I_2^{a_2}\cap\ldots\cap I_n^{a_n}$ holds for all sequences $\mathbf{a}\in\mathbb{N}^n$.
\end{theorem}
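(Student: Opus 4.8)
The plan is to mimic the inductive strategy used for the $A_n$ case, performing induction on $\sum_{i=1}^n a_i$ and peeling off one index at a time using Lemmata~\ref{SinksNSources} and~\ref{Free}. So let $P \in I_1^{a_1} \cap \cdots \cap I_n^{a_n}$ and assume that the conjectured equality holds for every sequence with strictly smaller total sum. The base case $\mathbf{a} = 0$ is trivial. For the inductive step I would choose a clever index $i$ with $a_i \neq 0$ whose removal is justified by one of the auxiliary lemmata, namely: use Lemma~\ref{SinksNSources} when $i$ is a source or sink adjacent to another index $j$ with $a_j \neq 0$; use Lemma~\ref{Free}(a) when all mutable neighbours $j \in N(i)$ satisfy $a_j = 0$; and use Lemma~\ref{Free}(b) when $f_i = x_k + M_i$ and all $j \in N(k)\setminus\{i\}$ satisfy $a_j = 0$.

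The first step is to fix a convenient orientation; the one given in the statement makes vertices $1$, $4$ and $n$ into sources (with $1 \to 2 \to 3$, the long arm $n \to \cdots \to 6 \to 5 \to 3$, and $3 \to 4$), and vertex $3$ into the branch vertex receiving three arrows. With this orientation in hand, the core of the proof is a finite case analysis showing that for \emph{every} nonzero $\mathbf{a} \in \mathbb{N}^n$ at least one index admits one of the three reductions. Concretely: if some source among $\{1,4,n\}$ carries a nonzero exponent and has a neighbour with nonzero exponent, apply Lemma~\ref{SinksNSources}; if a source carries a nonzero exponent but all its neighbours carry zero, apply Lemma~\ref{Free}(a) (this handles e.g. $i=1$ with $a_2 = 0$, $i=4$ with $a_3 = 0$, $i=n$ with $a_{n-1}=0$); and for interior vertices of the arms — each of which has its initial exchange polynomial of the form $f_i = x_{i-1}^{?}x_{i+1}^{?}\cdots$, or more to the point a \emph{linear} monomial plus another monomial so that Remark~\ref{ViceVersa} applies — pick the extremal interior vertex on an arm whose outer neighbour already has exponent $0$ and apply Lemma~\ref{Free}(b) with $k$ the adjacent vertex closer to the branch vertex $3$. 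Working along each of the three arms of $E_n$ starting from its free end, exactly as in the $A_n$ proof (``$i$ the smallest integer such that $a_i \neq 0$, and $k = i-1$''), covers all configurations supported away from vertex $3$; the genuinely new bookkeeping is only at the branch vertex.

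The main obstacle I anticipate is precisely the branch vertex $3$ together with the short arm vertex $4$: one must check that whenever $a_3 \neq 0$ (and the reductions above have been exhausted, so that the support of $\mathbf{a}$ clusters around vertex $3$), there is still a legal move. Here I expect to exploit that $3$ is a sink of the subquiver on $\{3,4\}$, or to use $f_4 = 1 + x_3$ (so vertex $4$ has $f_4$ linear in $x_3$, fitting Lemma~\ref{Free}(b) with $k=3$) once $a_i = 0$ for the other neighbours of $3$; and symmetrically, $f_1 = 1 + x_2$ and the exchange polynomial at vertex $5$ or $2$ gives linear forms usable via Remark~\ref{ViceVersa}. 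The argument should reduce, after finitely many peelings, to a sequence supported on a single edge or a single vertex, where coprimality (Proposition~\ref{coprime}, since the end vertices are sinks/sources) finishes it. I would organize the write-up as: (i) record which vertices are sources/sinks and which $f_i$ are linear in a neighbouring variable; (ii) state the reduction rules as a short lookup table; (iii) verify by inspection that the table is exhaustive for $E_6$, $E_7$, $E_8$ simultaneously, noting that enlarging $n$ only lengthens the long arm, to which the $A_n$-style argument already applies verbatim.
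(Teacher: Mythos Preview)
Your inductive framework matches the paper's, but the execution at the branch vertex has a genuine gap. Neither of your proposed moves there works: Lemma~\ref{SinksNSources} requires $i$ to be a sink or source in the \emph{whole} quiver, and vertex $3$ is neither; and applying Lemma~\ref{Free}(b) with $i=4$, $k=3$ (via $f_4 = 1 + x_3$) requires $a_4 \neq 0$, so it says nothing in the critical case where the support sits at $3$ with $a_4 = 0$. Your arm-peeling also has $k$ pointing the wrong way: choosing $k$ ``closer to the branch vertex $3$'' makes the hypothesis $a_j = 0$ for $j \in N(k)\setminus\{i\}$ involve vertices deeper in the support, which typically fails. (In the $A_n$ argument you cite, $k = i - 1$ points toward the already-cleared leaf, not inward.) Concretely, for $\mathbf a$ with $a_1 = a_4 = a_6 = \dots = a_n = 0$ and $a_2, a_3, a_5 > 0$, none of your listed reductions applies. (Incidentally, with $3 \to 4$ vertex $4$ is a sink, not a source; this is harmless for Lemma~\ref{SinksNSources} but worth correcting.)

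The missing observation, which is the crux of the paper's proof, goes the other way round at the branch: since $f_3 = x_4 + x_2 x_5$ is linear in $x_4$ and vertex $4$ is a leaf, Lemma~\ref{Free}(b) applies with $i = 3$ and $k = 4$ whenever $a_3 \neq 0$ and $a_4 = 0$, with \emph{no} condition on $a_2$ or $a_5$ (because $N(4)\setminus\{3\} = \emptyset$). Combined with Lemma~\ref{SinksNSources} at the sink $4$ (covering $a_3, a_4 > 0$) and Lemma~\ref{Free}(a) at $i=4$ (covering $a_4 > 0$, $a_3 = 0$), this reduces every case to $a_3 = a_4 = 0$; the remaining ideals then form a type-$A$ chain and the arm argument (with $k$ pointing outward) finishes it.
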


\begin{figure}
\begin{center}
\begin{tikzpicture}
\node[rectangle,rounded corners,draw] (1) at (2,0) {$1$};
\node[rectangle,rounded corners,draw] (2) at (0,0) {$2$};
\node[rectangle,rounded corners,draw] (3) at (0,-2) {$3$};
\node[rectangle,rounded corners,draw] (4) at (2,-2) {$4$};
\node[rectangle,rounded corners,draw] (5) at (0,-4) {$5$};
\node[rectangle,rounded corners,draw] (6) at (2,-4) {$6$};
\draw[->, >=latex', shorten >=2pt, shorten <=2pt,thick] (1) to node {} (2);
\draw[->, >=latex', shorten >=2pt, shorten <=2pt,thick] (2) to node {} (3);
\draw[->, >=latex', shorten >=2pt, shorten <=2pt,thick] (3) to node {} (4);
\draw[->, >=latex', shorten >=2pt, shorten <=2pt,thick] (5) to node {} (3);
\draw[->, >=latex', shorten >=2pt, shorten <=2pt,thick] (6) to node {} (5);
\end{tikzpicture}
\end{center}
\caption{The orientation of the Dynkin diagram $E_6$}
\label{ESix}
\end{figure}
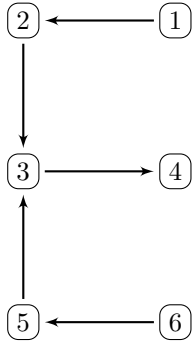

\begin{proof} As in the proof of the previous theorem we proceed by induction on $\sum_{i=1}^na_i$. The base case is trivial. Assume that the statements holds for all sequences with a smaller sum. If $a_3$ and $a_4$ are both greater than $0$, then the claim follows from Lemma \ref{SinksNSources}, because $i=4$ is a sink which is adjacent to $j=3$. If $a_4$ is greater than $0$, but $a_3=0$, then the claim follows from Lemma \ref{Free} (a) with $i=4$, because $N(4)=\{3\}$. If $a_3$ is greater than $0$, but $a_4=0$, then the claim follows from Lemma \ref{Free} (b) with $i=3$ and $k=4$, because $N(4)=\{3\}$. So we may assume $a_3=a_4=0$. In this case, we have $I_1=(x_1,1+x_2)$, $I_2=(x_2,x_1+x_3)$, $I_i=(x_i,x_{i-1}+x_{i+1})$ for $5\leq i \leq n-1$, and $I_n=(x_n,1+x_{n-1})$, which are of the same form as in type $A_{n-1}$. Essentially the same argument yields $I^{\mathbf{a}}=I_1^{a_1}\cap I_2^{a_2}\cap\ldots\cap I_n^{a_n}$. 
\end{proof}

\subsection*{Acknowledgments.} The author would like to thank Christof Gei\ss, Anurag Singh and Jan Schr\"oer for helpful discussions and Matthias Warkentin for comments on an earlier version of the article.


\begin{thebibliography}{99}

\bibitem{PrimDec} M. Atiyah, I. Macdonald, \emph{Introduction to Commutative Algebra}, Addison-Wesley Publishing Company (1969).

\bibitem{BFZ3} A. Berenstein, S. Fomin, A. Zelevinsky, \emph{Cluster algebras III: Upper bounds and double Bruhat cells}, Duke Mathematical Journal \textbf{126} (2005), no. 1, 1--52. \href{http://arxiv.org/abs/math/0305434}{arXiv:math/0305434}.

\bibitem{BMRRT} A. Buan, R. Marsh, M. Reineke, I. Reiten, G. Todorov, \emph{Tilting theory and cluster combinatorics}, Advances in Mathematics \textbf{204} (2006), no. 2, 572--618. \href{http://arxiv.org/abs/math/0402054}{arXiv:math/0402054}.

\bibitem{CC} Ph. Caldero, F. Chapoton, \emph{Cluster algebras as Hall algebras of quiver representations}, Commentarii Mathematici Helvetici \textbf{81} (2006), no. 3, 595--616. \href{http://arxiv.org/abs/math/0410187}{arXiv:math/0410187}.

\bibitem{CK} Ph. Caldero, B. Keller, \emph{From triangulated categories to cluster algebras}, Inventiones Mathematicae \textbf{172} (2008), no. 1, 169--211. \href{http://arxiv.org/abs/math/0506018}{arXiv:math/0506018}.

\bibitem{CK2} Ph. Caldero, B. Keller, \emph{From triangulated categories to cluster algebras II}, Annales Scientifiques de l'\'Ecole Normale Sup\'erieure (4) \textbf{39} (2006), no.6, 983--1009. \href{http://arxiv.org/abs/math/0510251}{arXiv:math/0510251}.

\bibitem{CKLP} G. Cerulli Irelli, B. Keller, D. Labardini-Fragoso, P.-G.  Plamondon, \emph{Linear independence of cluster monomials for skew-symmetric cluster algebras}, Preprint \href{http://arxiv.org/abs/1203.1307}{arXiv:1203.1307} (2012).   

\bibitem{DT} G. Dupont, H. Thomas, \emph{Atomic bases in cluster algebras of types A and $\widetilde A$}, Preprint \href{http://arxiv.org/abs/1106.3758}{arXiv:1106.3758} (2011). 

\bibitem{FG1} V. Fock, A. Goncharov, \emph{Moduli spaces of local systems and higher Teichm\"uller theory}: Publications Math\'ematiques. Institut des Hautes \'Etudes Scientifiques \textbf{103}, no. 1 (2006), 1--211. Preprint \href{http://arxiv.org/abs/math.AG/0311149} {arXiv:math/0311149}.

\bibitem{FG2}  V. Fock, A. Goncharov, \emph{Cluster ensembles, quantization and the dilogarithm}: Annales scientifiques de l'\'Ecole normale sup\'erieure \textbf{42}, no. 6 (2009), 865--930. Preprint \href{http://arxiv.org/abs/math/0311245} {arXiv:math/0311245}.

\bibitem{FST} S. Fomin, M. Shapiro, D. Thurston, \emph{Cluster algebras and triangulated surfaces. Part I: Cluster complexes}: Acta Mathematica \textbf{201}, no.1 (2008), 83--146. Preprint \href{http://arxiv.org/abs/math/0608367} {arXiv:math/0608367}.

\bibitem{FZ} S. Fomin, A. Zelevinsky, \emph{Cluster algebras I: Foundations}, Journal of the American Mathematical Society \textbf{15} (2002), no. 2, 497--529. \href{http://arxiv.org/abs/math/0104151}{arXiv:math/0104151}.

\bibitem{FZ2} S. Fomin, A. Zelevinsky, \emph{Cluster algebras. II. Finite type classification}, Inventiones Mathematicae \textbf{154} (2003), no. 1, 63--121. \href{http://arxiv.org/abs/math/0208229}{arXiv:math/0208229}

\bibitem{FZ4} S. Fomin, A. Zelevinsky, \emph{Cluster algebras IV: Coefficients}, Compositio Mathematica \textbf{143} (2007), no. 1, 112--164. \href{http://arxiv.org/abs/math/0602259}{arXiv:math/0602259}. 

\bibitem{GLS3} Ch. Gei\ss , B. Leclerc, J. Schr\"oer, \emph{Cluster algebra structures and semicanonical bases for unipotent groups}, Preprint \href{http://arxiv.org/abs/math/0703039}{arXiv:math/0703039} (2007).

\bibitem{GLS2} Ch. Gei\ss , B. Leclerc, J. Schr\"oer, \emph{Kac-Moody groups and cluster algebras }, Advances in Mathematics \textbf{228} (2011), no. 1, 329--433. \href{http://arxiv.org/abs/1001.3545}{arXiv:1001.3545}.

\bibitem{GLS} Ch. Gei\ss , B. Leclerc, J. Schr\"oer, \emph{Factorial cluster algebras}, Preprint \href{http://arxiv.org/abs/1110.1199}{arXiv:1110.1199} (2011).

\bibitem{GSV} M. Gekhtman, M. Shapiro, A. Vainshtein, \emph{Cluster algebras and Weil-Petersson forms}: Duke Mathematical Journal \textbf{127}, no. 2, (2005), 291--311. Preprint \href{http://arxiv.org/abs/math/0309138} {arXiv:math/03091380}.

\bibitem{LLZ} K. Lee, L. Li, A. Zelevinsky, \emph{Greedy elements in rank 2 cluster algebras}, Preprint \href{http://arxiv.org/abs/1208.2391}{arXiv:1208.2391} (2012). 

\bibitem{L1} G.~Lusztig, \emph{Canonical bases arising from quantized enveloping algebras}, Journal of the American Mathematical Society \textbf{3} (1990), no. 2, 447--498.

\bibitem{L2} G.~Lusztig, \emph{Total positivity in reductive groups}, Lie theory and geometry, Progress in Mathematics, vol. 123, Birkh\"auser Boston, Boston, MA (1994), 531--568.

\bibitem{M} G. Muller, \emph{Locally acyclic cluster algebras}, Preprint \href{http://arxiv.org/abs/1111.4468}{arXiv:1111.4468} (2011). 

\bibitem{M2} G. Muller, \emph{Skein algebras and cluster algebras of marked surfaces}: Preprint \href{http://arxiv.org/abs/1204.0020} {arXiv.1204.0020} (2012).

\bibitem{MSW} G. Musiker, R. Schiffler, L. Williams, \emph{Bases for cluster algebras from surfaces}, to appear in Compositio Mathematica. Preprint \href{http://arxiv.org/abs/1110.4364v2} {arXiv:1110.4364v2} (2011).


\end{thebibliography}
\end{document}